\newtheorem{theorem}{Theorem}[section]
\newtheorem{lemma}[theorem]{Lemma}
\newtheorem{remark}{Remark}
\theoremstyle{definition}
\newtheorem{definition}[theorem]{Definition}
\pgfplotsset{compat=1.11}
\tikzset{cross/.style={cross out, draw=black, minimum size=2*(#1-\pgflinewidth), inner sep=0pt, outer sep=0pt},
%default radius will be 1pt. 
cross/.default={1pt}}
\title{A discretization-invariant extension and analysis of some deep operator networks}
\author{ Zecheng Zhang\footnote{Department of Mathematics, Florida State University, Tallahassee, FL 32304, USA. (Email: zecheng.zhang.math@gmail.com)}, ~Wing Tat Leung \footnote{Department of Mathematics, City University Hong Kong, Hong Kong, China. (Email: wtleung27@cityu.edu.hk)}, 
~Hayden Schaeffer \footnote{Department of Mathematics, UCLA, Los Angeles, CA 90095. (Email: hayden@math.ucla.edu)} }
\begin{document}

\maketitle

\begin{abstract}

We present a generalized version of the discretization-invariant neural operator in \cite{zhang2022belnet} and prove that the network is a universal approximation in the operator sense. Moreover, by incorporating additional terms in the architecture, we establish a connection between this discretization-invariant neural operator network and those discussed in \cite{chen1995universal} and \cite{lu2021learning}. The discretization-invariance property of the operator network implies that different input functions can be sampled using various sensor locations within the same training and testing phases. Additionally, since the network learns a ``basis'' for the input and output function spaces, our approach enables the evaluation of input functions on different discretizations. To evaluate the performance of the proposed discretization-invariant neural operator, we focus on challenging examples from multiscale partial differential equations. Our experimental results indicate that the method achieves lower prediction errors compared to previous networks and benefits from its discretization-invariant property.
\end{abstract}

\section{Introduction}

Operator learning \cite{chen1995universal, lu2021learning, zhang2022belnet, li2020fourier} is an approach to approximate mappings between two function spaces, and can be seen as a generalization of the standard machine learning architectures. These approaches have gained significant attention in recent years, particularly for their applicability to scientific computing applications that require approximations of solution operators for spatiotemporal dynamics \cite{lu2021learning, zhang2022belnet, li2020fourier}. Operator networks have been used to approximate solutions to parametric partial differential equations (PDEs) \cite{lu2022comprehensive, zhang2022belnet, jin2022mionet, bhattacharya2021model}. Furthermore, operator learning has been employed for modeling control problems in dynamical systems \cite{li2022learning, michalowska2023neural}. Additionally, operator learning can be used to train models with varying levels of fidelity \cite{howard2022multifidelity, lu2022multifidelity, howard2023multifidelity} and for data-driven prediction \cite{pathak2022fourcastnet, cao2023deep}.

Operator learning was initially proposed in \cite{chen1995universal, chen1993approximations} using a shallow network architecture and was shown to be a universal approximation for nonlinear continuous operators. Building upon this work, the Deep Operator Neural Network (DON) was developed in \cite{lu2021learning, jin2022mionet} and extended the network in \cite{chen1995universal} to deep architectures. In particular, \cite{lu2021learning} extend the two layer operator network to networks of arbitrary depth, while \cite{jin2022mionet} generalized the operator network to handle multi-input and multi-output problems. Convergence analysis of DON can be found in \cite{lanthaler2022error, lanthaler2023curse}. Additionally, \cite{lin2021accelerated, lin2023b} investigated operator learning in the presence of noise and proposed accelerated training methodologies for DON. Another approach, known as the Fourier neural operator (FNO), was introduced and analyzed in \cite{li2020fourier, kovachki2021universal, wen2022u, zhu2023fourier}. FNO uses the Fourier transformation and inverse Fourier transformation on the kernel integral approximation for approximating an operator. A comparison between DON and FNO in terms of theory and computational accuracy is presented in \cite{lu2022comprehensive}. Additional noteworthy operator learning frameworks include \cite{zhang2022belnet, o2022derivate, qian2022reduced, zhu2023fourier}.

The choice of discretizations and domains is crucial in operator learning, as both the input and output are functions. A neural operator is said to be input (output) discretization-invariant if the network can handle varying discretizations for the input (output) function. This means that the input (or output) functions can be evaluated on different grids during both training and testing phases \cite{zhang2022belnet, li2020fourier, li2020neural}. This property is sometimes referred to as resolution-invariant or a non-uniform mesh approach. A discretization-invariant method is one that does not require the (1) the input discretization to be fixed, (2) the output discretization to be fixed, and (3) the input and output spaces to be the same \cite{zhang2022belnet}.
The Basis Enhanced Learning operator network (BelNet) was developed as a discretization-invariant approach. BelNet shares similarities with DON \cite{meuris2023machine} as it learns a representation for the output function space through the \textit{construction net} (see Figure \ref{fig_network_general}). However, BelNet also learns the projection of the input functions onto a set of ``basis'' terms obtained during training using the \textit{projection net}. The architecture of BelNet resembles an encoder-decoder, where the projection net acts as an encoder and the subsequent layers in the network decode the reduced-order model produced by the earlier layers. Numerical experiments presented in \cite{zhang2022belnet} demonstrate that BelNet can approximate nonlinear operators without relying on fixed grids, although this property has not been formally proven.

In this work, we present a generalization of BelNet and provide a proof of the universal approximation theorem in the operator sense. We introduce a sub-network structure called the \textit{nonlinear net}, which enhances the flexibility of the architecture. This nonlinear net is motivated by the universal approximation theorem proposed by \cite{chen1995universal}. Our proof strategy involves establishing connections between various discretizations through a proxy sampling that is fixed, thus allowing us to leverage existing approximation theorems. Furthermore, our proof introduces a more comprehensive approach through the encoder-decoder structure. Specifically, we demonstrate that our model obtains a reduced order model within a subnetwork, which while often stated in the literature, has not been shown. To distinguish our proposed network from previous work, we refer to the BelNet framework introduced in \cite{zhang2022belnet} as ``vanilla BelNet,'' while our new network is referred to as ``BelNet.'' The proposed network is related to the parallel work of \cite{hua2023basis}, who developed a similar structure based on a universal approximation result for operators with varying sensors. However, the analysis in \cite{hua2023basis} relies on having access to the continuous inner product layer, which does not imply that the conclusions hold for the discrete (approximated) network.

 Data plays a pivotal role in augmenting the learning process for physical systems. Notably, one can gain insights into the underlying principles governing physical phenomena through data-discovery \cite{schmidt2009distilling, brunton2016discovering, brunton2016sparse, schaeffer2013sparse, schaeffer2017learning, mangan2016inferring, schaeffer2018extracting, raissi2018hidden, schaeffer2017integral, schaeffer2020extracting2}. By employing data-driven approaches, these works have effectively extracted and learned valuable information about the intricate dynamics of the physical systems or governing model. This is one potential of data-driven methods for scientific enhancement and for modeling of complex physical processes. Recently, related methods for solving multiscale problems were proposed, wherein real observation data is employed to enhance a coarse-scale multiscale model \textcolor{black}{\cite{zhang2023homogenization}}. The approach involves training an operator that can map the coarse-scale solution (input function) to a finer-scale solution (output function) using the finer-scale solution obtained at specific locations within the domain.

To evaluate the performance of BelNet, we examine its effectiveness in solving the viscous Burgers' equation and learning the mapping between two multiscale models. Previous work \textcolor{black}{\cite{zhang2023homogenization}} demonstrated the concept of mapping between coarse and fine-scale solutions using operator learning, showing its applicability in various examples using DON. In our work, we show that BelNet offers more flexibility in selecting observation points for the coarse-scale input functions. To introduce additional complexity to the problem, we employ random sensors to sample the input functions. The corresponding section provides detailed results.

\subsection{Contributions}
We summarize the key contributions in this work below.
\begin{enumerate}
    \item We introduce a generalization of the vanilla BelNet \cite{zhang2022belnet}, a neural operator that is discretization invariant, and a proof of the universal approximation property for this extended model.
    \item The new BelNet extends the universal approximation results of \cite{chen1995universal, chen1993approximations, lu2021learning}.
    \item  We show a learning approach to map between two multiscale models on different grids and showcase the effectiveness of BelNet in handling challenging observation data.
\end{enumerate}

The rest of the paper is organized as follow. In Section \ref{sec_prelim}, we will review DON, vanilla BelNet, and the extended BelNet. We then present the universal approximation analysis in Section \ref{sec_proof}. In Section \ref{sec_numerical}, we present numerical experiments.

\section{Preliminary Results and Important Lemmata}
\label{sec_prelim}

Let $Y$ be a Banach space and assume that $K_1\subset Y$ and $K_2\subset \mathbb{R}$ are both compact.
Also, let $V\subset C(K_1)$ be compact and $G: V\rightarrow C(K_2)$ be a continuous and nonlinear operator. DON (see Figure \ref{fig_don_structure}) approximates the operator $G$ using a deep neural network. Specifically, 
 in \cite{chen1995universal} it was shown that  for any $\epsilon>0$, there exists positive integers $M, N, K$, constants $c_i^k, \zeta_k, \theta_i^k, \varepsilon_{ij}^k\in\mathbb{R}$, points $\omega_k\in\mathbb{R}^d$, $y_j\in K_1$, where $i \in [M]$, $k\in [K]$, $j \in [N]$ such that
    \begin{align*}
        \bigg| G(u)(x) - \sum_{k = 1}^K \sum_{i = 1}^M c_i^k\, g\left(\sum_{j = 1}^N\varepsilon_{ij}^ku(y_j)+\theta_i^k\right)\, g(\omega_k\cdot x+\zeta_k)\bigg|<\epsilon
    \end{align*}
    holds for all $u\in V$ and $x\in K_2$.

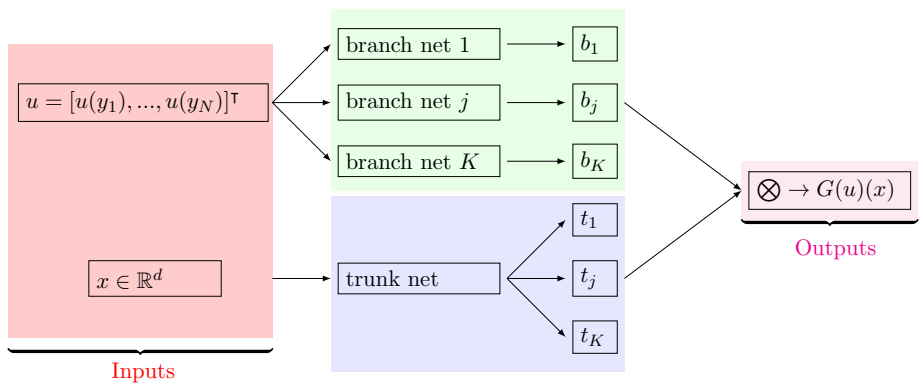
\begin{figure}[H]
\centering
\scalebox{.78}{\begin{tikzpicture}[scale = 1]
 \fill [green!10] (2, 4.5) rectangle (7, 7.6);
  \fill [blue!10] (2, 1.4) rectangle (7, 4.4);
    \fill [red!20] (-3.5, 2) rectangle (1, 7);

    \draw[ultra thick] [decorate,
    decoration = {calligraphic brace, mirror}] (-3.5, 1.8) --  (1, 1.8);
\node at (-1.2, 1.4) {\textcolor{red}{Inputs}};

\fill [magenta!10] (9, 4) rectangle (12, 5);
    \draw[ultra thick] [decorate,
    decoration = {calligraphic brace, mirror}] (9, 4) --  (12, 4);
\node at (10.6, 3.5) {\textcolor{magenta}{Outputs}};

\node[draw, text width=4cm] at (-1.2, 6) {$u = [u(y_1), ..., u(y_N)]^\intercal$};

 \draw [-latex ](1,6) -- (2, 7);
 \node[draw, text width = 2.5cm] at (3.5, 7) {branch net $1$};
 \draw [-latex ](5, 7) -- (6, 7);
 \node[draw, text width = 0.5cm] at (6.5, 7) {$b_1$};
 
 \draw [-latex ](1,6) -- (2, 5);
  \node[draw, text width = 2.5cm] at (3.5, 5) {branch net $K$};
  \draw [-latex ](5, 5) -- (6, 5);
  \node[draw, text width = 0.5cm] at (6.5, 5) {$b_K$};
  
  \draw [-latex ](1,6) -- (2, 6);
 \node[draw, text width=2.5cm] at (3.5, 6) {branch net $j$};
 \draw [-latex ](5, 6) -- (6, 6);
\node[draw, text width = 0.5cm] at (6.5, 6) {$b_j$};

 \node[draw, text width = 2cm] at (-1, 3) {$x\in\mathbb{R}^d$};
  \draw [-latex ](1, 3) -- (2, 3);
 \node[draw, text width = 2.5cm] at (3.5, 3) {trunk net};
 
 \draw [-latex ](5, 3) -- (6, 4);
\node[draw, text width = 0.5cm] at (6.5, 4) {$t_1$};

  \draw [-latex ](5, 3) -- (6, 3);
\node[draw, text width = 0.5cm] at (6.5, 3) {$t_j$};

   \draw [-latex ](5, 3) -- (6, 2);
 \node[draw, text width = 0.5cm] at (6.5, 2) {$t_K$};

\draw [-latex ](7, 6) -- (9, 4.5);
  \draw [-latex ](7, 3) -- (9, 4.5);

  \node[draw, text width = 2.5cm] at (10.5, 4.5) {$\bigotimes\rightarrow G(u)(x)$};

\end{tikzpicture}}
\caption{Stacked version DON. $\bigotimes$ denotes the inner product in $\mathbb{R}^K$.}
\label{fig_don_structure}
\end{figure}

The sensors are denoted by $y_i\in K_1$ and the input function $u$ is evaluated on the sensors. That is $y = [y_1, ..., y_N]^{\intercal}$ is the collection of points that represent the discrete grid for the input functions. Theorem 5 in \cite{chen1995universal} states that the sensors for all input functions $u$ must be the same (i.e. the input discretization must be fixed). This constraint imposes limitations on the applicability of the DON framework in the regime where one does not have control over the input functions or the sensor locations. Ideally, it would be desirable for the operator learning approach to allow for different input functions $u$ to have varying or non-uniform discretization, i.e. to have a discretization-invariant method.

The vanilla BelNet, as displayed in Figure \ref{fig_network_s1}, learns the basis functions for both the input and output function spaces. The authors provide an explanation of the network structure by examining a special case (linear) and validating the discretization-invariant property through various numerical experiments. Mathematically, let us introduce weights and biases, $q^{k}\in\mathbb{R}^{d}$, $W_y^{1, k}\in\mathbb{R}^{N_1\times N}$, $W_y^{2, k}\in\mathbb{R}^{N\times N_1}$, 
$b_x^k\in \mathbb{R}$, and $b_y^k\in\mathbb{R}^{N_1}$, where $k = 1, ..., K$, and activation functions $a_x$, $a_y$ and $a_u$, then the vanilla BelNet, denoted by $N_{\theta}$, approximates the operator $G$ as follows,
\begin{align}
    G(u)(x) \approx N_{\theta}(u(y), y)(x) = \sum_{k = 1}^K  a_x  \left((q^{k})^\intercal x+ b_x^k \right) \, a_u \left(\hat{u}^\intercal W_{y}^{2, k}\left(a_y(W_{y}^{1, k} y + b_y^k )  \right) \right),
    \label{intro_formulation3}
\end{align}
for $x\in K_2\subset \mathbb{R}$, $u\in V$,  and where $y = [y_1,...,y_N]^{\intercal}\subset K_1^N$ and $\hat{u} = [u(y_1), ..., u(y_N)]^\intercal$.
The network structure is also displayed in Figure \ref{fig_network_s1}. We do not assume that the sensors $y_i\in K_1$ are uniform for all input functions (i.e. they are not fixed). 
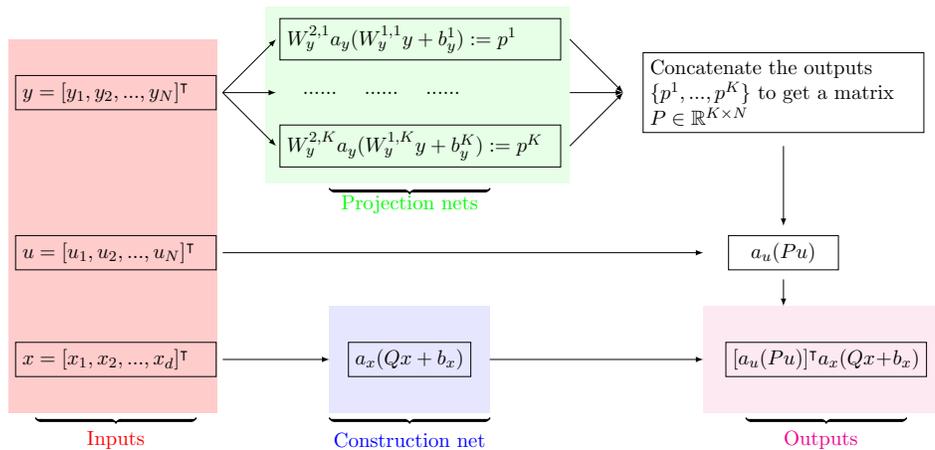
\begin{figure}[H]
\centering
\scalebox{.71}{\begin{tikzpicture}[scale = 1]

% color the region
% input
\fill [red!20] (-3, 0) rectangle (0.9, 7);
\draw[ultra thick] [decorate,
    decoration = {calligraphic brace, mirror}] (-2.5, -0.1) --  (0.5, -0.1);
\node at (-1, -0.5) {\textcolor{red}{Inputs}};

% projection net
\fill [green!10] (1.8, 4.3) rectangle (7.5, 7.6);
\draw[ultra thick] [decorate,
    decoration = {calligraphic brace, mirror}] (3, 4.2) --  (6, 4.2);
\node at (4.5, 3.9) {\textcolor{green}{Projection nets}};

% construction net
\fill [blue!10] (3, 0) rectangle (6, 2);

\draw[ultra thick] [decorate,
    decoration = {calligraphic brace, mirror}] (3, -0.1) --  (6, -0.1);
\node at (4.5, -0.5) {\textcolor{blue}{Construction net}};

% outputs
\fill [magenta!10] (10., 0) rectangle (14.5, 2);

\draw[ultra thick] [decorate,
    decoration = {calligraphic brace, mirror}] (10.8, -0.1) --  (13.8, -0.1);
\node at (12.2, -0.5) {\textcolor{magenta}{Outputs}};
    
% create y
 \node[draw, text width = 3.5cm] at (-1, 6) {$y = [y_1, y_2, ..., y_N]^\intercal$};
 
 \draw [-latex ](1,6) -- (2, 7);
 \node[draw, text width = 5cm] at (4.7, 7) {$W_y^{2, 1}a_y(W_y^{1,1}y+b_y^1):=p^1$};
 
 \draw [-latex ](1,6) -- (2, 5);
  \node[draw, text width = 5cm] at (4.7, 5) {$W_y^{2, K}a_y(W_y^{1, K}y+b_y^K):= p^K$};
  
  \draw [-latex ](1,6) -- (2, 6);
 \node[text width=5cm] at (5, 6) {$... ...$ \quad $... ...$ \quad $... ...$};

 \draw [-latex ](7.5, 7) -- (8.5, 6);
 \draw [-latex ](7.5, 6) -- (8.5, 6);
 \draw [-latex ](7.5, 5) -- (8.5, 6);

 \node[draw, text width = 5cm] at (11.5, 6) {Concatenate the outputs $\{p^1, ..., p^K\}$ to get a matrix $P \in\mathbb{R}^{K\times N}$};

% create u
\node[draw, text width=3.5cm] at (-1, 3) {$u = [u_1, u_2, ..., u_N]^\intercal$};
\draw [-latex ](1, 3) -- (10, 3);

% activation and Pu
\node[draw, text width = 1.8 cm, align = center] at (11.5, 3) {$a_u(Pu)$};

% connect P and and u (vertical line)
\draw [-latex ](11.5, 5) -- (11.5, 3.5);

% create x
 \node[draw, text width = 3.5cm] at (-1, 1) {$x = [x_1, x_2, ..., x_d]^\intercal$};
 \draw [-latex ](1,1) -- (3, 1);
\node[draw] at (4.5, 1) {$a_x(Qx + b_x)$};
 \draw [-latex ](6,1) -- (10, 1);

 % final dot prod
\node[draw, text width = 3.5 cm, align = center] at (12.3, 1) {$[a_u(Pu)]^\intercal a_x(Qx + b_x)$};

% connect a(PU) and x (vertical line bottom)
\draw [-latex ](11.5, 2.5) -- (11.5, 2.0);

\end{tikzpicture}}
\caption{Plot of the vanilla BelNet structure. Projection nets are $K$ independent fully connected neural network with weights and bias $W_y^{2, k}\in\mathbb{R}^{N\times N_1}$, $W_y^{1, k}\in\mathbb{R}^{N_1\times N}$ and $b_y^k\in\mathbb{R}^{N_1}$. Construction net is a fully connected neural network with weights and bias $Q\in\mathbb{R}^{K\times d}$ and $b_x\in\mathbb{R}^d$. Here $Q = [q^1, q^2, ..., q^K]$, where $q^i\in\mathbb{R}^d$ are defined in Equation (\ref{intro_formulation3}). In addition, $a_x, a_y, a_u$ are activation functions. }
\label{fig_network_s1}
\end{figure}

The motivation behind the vanilla BelNet stems from Mercer's theorem, which involves approximating the linear operator through a kernel integral formulations \cite{zhang2022belnet}. However, in order to introduce nonlinearity, the activation function $a_u$ is incorporated. For flexibility and expressiveness, the authors in \cite{zhang2022belnet} included an extra trainable layer prior to the activation function, denoted by the term $a_u(WPu)$. Here, $W$ represents a trainable matrix of the appropriate dimension.

In this work, we generalize the vanilla BelNet and prove the universal approximation theorem. 
Instead of applying an activation function, we design a network to enforce the nonlinearity, see Figure \ref{fig_network_general}. The additional subnetwork is theoretically consistent with an operator approximation and is partly motivated through the analysis detailed in Section~\ref{sec_proof}.

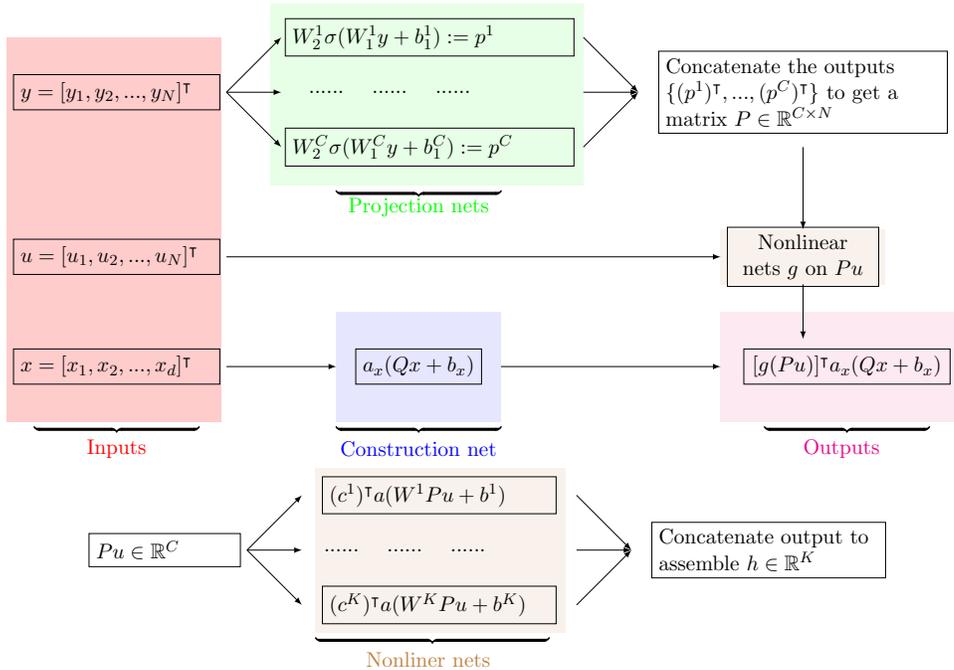
\begin{figure}[H]
\centering
\scalebox{.73}{\begin{tikzpicture}[scale = 1]

% color the region
% input
\fill [red!20] (-3, 0) rectangle (0.9, 7);
\draw[ultra thick] [decorate,
    decoration = {calligraphic brace, mirror}] (-2.5, -0.1) --  (0.5, -0.1);
\node at (-1, -0.5) {\textcolor{red}{Inputs}};

% projection net
\fill [green!10] (1.8, 4.3) rectangle (7.5, 7.6);
\draw[ultra thick] [decorate,
    decoration = {calligraphic brace, mirror}] (3, 4.2) --  (6, 4.2);
\node at (4.5, 3.9) {\textcolor{green}{Projection nets}};

% construction net
\fill [blue!10] (3, 0) rectangle (6, 2);

\draw[ultra thick] [decorate,
    decoration = {calligraphic brace, mirror}] (3, -0.1) --  (6, -0.1);
\node at (4.5, -0.5) {\textcolor{blue}{Construction net}};

% outputs
\fill [magenta!10] (10., 0) rectangle (14.5, 2);

% brunch nets
\fill [brown!10] (10, 2.5) rectangle (13, 3.5);

\draw[ultra thick] [decorate,
    decoration = {calligraphic brace, mirror}] (10.8, -0.1) --  (13.8, -0.1);
\node at (12.2, -0.5) {\textcolor{magenta}{Outputs}};
    
% create y
 \node[draw, text width = 3.5cm] at (-1, 6) {$y = [y_1, y_2, ..., y_N]^\intercal$};
 
 \draw [-latex ](1,6) -- (2, 7);
 \node[draw, text width = 5cm] at (4.7, 7) {$W_2^{1}\sigma(W_1^{1}y+b_1^1):=p^1$};
 
 \draw [-latex ](1,6) -- (2, 5);
  \node[draw, text width = 5cm] at (4.7, 5) {$W_2^{C}\sigma(W_1^{C}y+b_1^C):= p^C$};
  
  \draw [-latex ](1,6) -- (2, 6);
 \node[text width=5cm] at (5, 6) {$... ...$ \quad $... ...$ \quad $... ...$};

 \draw [-latex ](7.5, 7) -- (8.5, 6);
 \draw [-latex ](7.5, 6) -- (8.5, 6);
 \draw [-latex ](7.5, 5) -- (8.5, 6);

 \node[draw, text width = 5cm] at (11.5, 6) {Concatenate the outputs $\{(p^1)^{\intercal}, ..., (p^C)^{\intercal}\}$ to get a matrix $P \in\mathbb{R}^{C\times N}$};

% create u
\node[draw, text width=3.5cm] at (-1, 3) {$u = [u_1, u_2, ..., u_N]^\intercal$};
\draw [-latex ](1, 3) -- (10, 3);

% activation and Pu
\node[draw, text width = 2.5 cm, align = center] at (11.5, 3) {Nonlinear nets $g$ on $Pu$};

% connect P and and u (vertical line)
\draw [-latex ](11.5, 5) -- (11.5, 3.5);

% create x
 \node[draw, text width = 3.5cm] at (-1, 1) {$x = [x_1, x_2, ..., x_d]^\intercal$};
 \draw [-latex ](1,1) -- (3, 1);
\node[draw] at (4.5, 1) {$a_x(Qx + b_x)$};
 \draw [-latex ](6,1) -- (10, 1);

 % final dot prod
\node[draw, text width = 3.5 cm, align = center] at (12.3, 1) {$[g(Pu)]^\intercal a_x(Qx + b_x)$};

% connect a(PU) and x (vertical line bottom)
\draw [-latex ](11.5, 2.5) -- (11.5, 1.5);

\end{tikzpicture}}
\scalebox{.73}{\begin{tikzpicture}[scale = 1]

% color the region
\fill [brown!10] (1.75, 4.5) rectangle (6.3, 7.5);
\draw[ultra thick] [decorate,
    decoration = {calligraphic brace, mirror}] (1.75, 4.4) --  (6, 4.4);
\node at (3.8, 4.0) {\textcolor{brown}{Nonliner nets}};

% create y
 \node[draw, text width = 2.5cm] at (-1, 6) {$Pu\in\mathbb{R}^C$};
 
 \draw [-latex ](0.5, 6) -- (1.5, 7);
  \draw [-latex ](0.5, 6) -- (1.5, 5);
    \draw [-latex ](0.5, 6) -- (1.5, 6);
    
 \node[draw, text width = 4cm] at (4, 7) {$(c^1)^{\intercal}a(W^1Pu+b^1)$};
  \node[draw, text width = 4cm] at (4, 5) {$(c^K)^{\intercal}a(W^KPu+b^K)$};
 \node[text width=5cm] at (4.4, 6) {$... ...$ \quad $... ...$ \quad $... ...$};

 \draw [-latex ](6.5, 7) -- (7.5, 6);
  \draw [-latex ](6.5, 5) -- (7.5, 6);
    \draw [-latex ](6.5, 6) -- (7.5, 6);

     \node[draw, text width = 4cm] at (10, 6) {Concatenate output to assemble $h \in\mathbb{R}^K$};

\end{tikzpicture}}
\caption{BelNet structure. Projection nets are $C$ independent fully connected neural network with weights and bias $W_1^{i}\in\mathbb{R}^{N_1\times N}$, $W_2^{i}\in\mathbb{R}^{C\times N_1}$ and $b_1^i\in\mathbb{R}^{N_1}$. Construction net is a fully connected neural network with weights and bias $Q\in\mathbb{R}^{K\times d}$ and $b_x\in\mathbb{R}^K$. Here $Q = [q^1, q^2, ..., q^K]$, where $q^i\in\mathbb{R}^d$. Nonlinear nets are $K$ indepedent neural networks. Specifically, 
$c^i\in\mathbb{R}^I$, and $W^i\in\mathbb{R}^{I\times C}$ and $b^i\in\mathbb{R}^{I}$. In addition, $a_x, \sigma, a$ are activation functions.}

\label{fig_network_general}
\end{figure}

\section{Main Results}

\label{sec_proof}

In this section, we prove the universal approximation theorem of BelNet in the sense of operators.
\begin{definition}
    If a function $g:\mathbb{R}\rightarrow \mathbb{R}$ (continuous or discontinuous) satisfies that all linear combinations $\Sigma_{i = 1}^Nc_ig(\lambda_i x+\theta_i)$ are dense in $C[a, b]$, where $c_i, \lambda_i, \theta_i\in\mathbb{R}$, then $g$ is called a \textit{Tauber-Wiener (TW) function}.
\end{definition}
Theorem 3 from \cite{chen1995universal} proves the universal approximation for functions. 
Unlike the universal approximation theorems from \cite{cybenko1989approximation, barron1993universal,jones1992simple}, the approximation coefficients $c_i(f)$ is a functional which depends on the input function $f$.
\begin{lemma}[Theorem 3 from \cite{chen1995universal}]
    Suppose $H\subset\mathbb{R}^{d}$ is compact, $V\subset C(H)$ is also compact, and $g\in TW$. Let $f\in V$ and for any $\epsilon>0$, there exists an integer $K>0$ independent of $f$, and continuous linear functionals $c_i$ on $V$ such that
    \begin{align*}
        \left|f(y) - \sum_{k = 1}^Kc_i(f)g(w_k\cdot y+b_k) \right|<\epsilon,
\end{align*}
for all $y\in H$ and $f\in V$.
\label{chen_theorem3}
\end{lemma}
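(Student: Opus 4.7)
The plan is to combine the TW definition (which gives density of $\{g(w\cdot y+b)\}$-combinations for each individual $f$) with the compactness of $V$ (to obtain a single finite set of parameters that works for all $f \in V$) and then apply Hahn-Banach together with an Auerbach basis to convert the approximation coefficients into continuous linear functionals on $V$ that are independent of $f$.

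First, I would observe that by the TW property the linear span $S=\operatorname{span}\{g(w\cdot y+b):w\in\mathbb{R}^d,\,b\in\mathbb{R}\}$ is dense in $C(H)$ (the $1$-dimensional TW definition extends to $C(H)$ by an easy argument using translations/dilations, or can be invoked as a standard consequence). Hence every $f\in V$ admits a sup-norm $\delta$-approximation from $S$, but a priori the number of terms and the $(w,b)$'s depend on $f$.

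To remove this dependence I would use the compactness of $V\subset C(H)$: for any $\delta>0$ choose a finite $\delta$-net $\{f_1,\dots,f_M\}\subset V$. For each $f_j$ pick parameters $\{(w_{j,i},b_{j,i})\}_{i=1}^{n_j}$ and coefficients $\alpha_{j,i}$ with $\|f_j-\sum_i\alpha_{j,i}g(w_{j,i}\cdot y+b_{j,i})\|_\infty<\delta$, then amalgamate all these $(w,b)$'s into a single list $\{(w_k,b_k)\}_{k=1}^{K}$ and set $W:=\operatorname{span}\{g(w_k\cdot y+b_k)\}_{k=1}^K\subset C(H)$. A triangle inequality over the $\delta$-net shows that every $f\in V$ lies within $2\delta$ of some element of $W$.

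The remaining and most delicate step is to produce continuous linear functionals $c_k$ on $V$, independent of $f$, realizing such an approximation. For this I would take an Auerbach basis $\{h_1,\dots,h_r\}$ of $W$ (recall $r\le K$), i.e.\ $\|h_j\|_\infty=1$ together with dual functionals $L_j$ on $W$ satisfying $L_j(h_i)=\delta_{ij}$ and $\|L_j\|_{W^*}=1$. Extending each $L_j$ to a norm-one functional $\widetilde{L}_j$ on all of $C(H)$ by Hahn-Banach and setting $Pf:=\sum_{j=1}^r\widetilde{L}_j(f)h_j$ gives a continuous linear projection onto $W$ with operator norm $\|P\|\le r$. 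For $f\in V$ and the approximant $w\in W$ with $\|f-w\|_\infty<2\delta$,
\begin{equation*}
\|f-Pf\|_\infty\le\|f-w\|_\infty+\|P(f-w)\|_\infty\le (1+r)\cdot 2\delta.
\end{equation*}
Rewriting each $h_j$ as a linear combination of the generators $g(w_k\cdot y+b_k)$ turns $Pf$ into $\sum_{k=1}^{K}c_k(f)\,g(w_k\cdot y+b_k)$, with each $c_k$ a fixed linear combination of the $\widetilde{L}_j$, hence a continuous linear functional on $C(H)$ (and a fortiori on $V$). Choosing $\delta=\epsilon/(2(1+r))$ finishes the argument, and the integer $K$, the parameters $(w_k,b_k)$, and the functionals $c_k$ are all independent of the particular $f\in V$.

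The main obstacle is the last step: a naive choice of dual basis for $W$ can produce a projection whose norm blows up, destroying the uniform control on the approximation error. The Auerbach basis lemma is the key tool that makes $\|P\|$ bounded by $\dim W$, and without it one cannot cleanly promote the ``$f$-by-$f$'' TW density into uniform, $f$-independent continuous linear functionals.
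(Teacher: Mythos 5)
Your overall architecture (multivariate TW density, a $\delta$-net of the compact set $V$, amalgamation of the parameters into a single span $W$, and a linear operator realizing the coefficients) is sensible, and the first two steps are fine. But the last step contains a genuine circularity that the argument as written does not close. The Auerbach/Hahn--Banach projection satisfies only $\|P\|\le r$ with $r=\dim W$, and $W$ --- hence $r$ --- is built from the $\delta$-net of $V$: as $\delta\to 0$ the covering number of $V$, and with it $K$ and $r$, can grow arbitrarily fast (compact subsets of $C(H)$ admit no universal bound on their entropy numbers). Your final error bound is $(1+r(\delta))\cdot 2\delta$, and you then ``choose $\delta=\epsilon/(2(1+r))$'' --- but $r$ is only determined after $\delta$ is. There is no a priori reason that $r(\delta)\,\delta\to 0$, no reordering of the quantifiers rescues the estimate, and Kadec--Snobar ($\|P\|\le\sqrt{r}$) does not help either. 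So the step you correctly flag as ``the most delicate'' is exactly where the proof breaks: Auerbach removes the worst-case blow-up of a naive dual basis, but a projection norm that grows with $\dim W$ is still fatal when $\dim W$ grows as the target accuracy shrinks.

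The proof the paper relies on (Theorem 3 of \cite{chen1995universal}, whose supporting machinery is reproduced here as Lemmas \ref{chen_lemma6} and \ref{chen_lemma7} and the functions $T_{k,j}$ of Equation (\ref{t_kj})) discretizes the domain $H$ rather than the function space $V$. One takes an $\eta$-net $\{z_j\}$ of $H$ together with the partition of unity $T_{k,j}$ and forms the quasi-interpolant $f\mapsto\sum_j f(z_j)T_{k,j}$. Because the $T_{k,j}$ are nonnegative and sum to one, this operator has norm at most $1$ \emph{independently of the number of nodes}; the uniform smallness of $\|f-\sum_j f(z_j)T_{k,j}\|_{C(H)}$ over $V$ comes from equicontinuity and uniform boundedness (Lemma \ref{chen_lemma6}), not from a projection-norm estimate. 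One then approximates each fixed function $T_{k,j}$ by $\sum_i a_{ji}\,g(w_{ji}\cdot y+b_{ji})$ using the multivariate TW density, and the resulting $c_k(f)$ are fixed linear combinations of the point evaluations $f(z_j)$, hence manifestly continuous and linear on $V$. If you want to keep your framework, you must replace the Auerbach projection by such a uniformly bounded quasi-projection; that substitution is essentially the Chen--Chen proof. (A minor additional point: the extension of the one-dimensional TW definition to density of $\spa\{g(w\cdot y+b)\}$ in $C(H)$ for $H\subset\mathbb{R}^d$ is Theorem 2 of \cite{chen1995universal} and goes through ridge-function approximation, not merely translations and dilations; it deserves more than a parenthetical.)
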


The following two topological lemmata are used to construct the input function approximation $u_k$ as detailed in Equation \ref{chen_u_k}.

\begin{lemma}[Lemma 5 from \cite{chen1995universal}]
    Let $Y$ be a Banach space and $H\subset Y$, then $H$ is compact if and only if the following two statements are true: 
    \begin{enumerate}
        \item $H$ is closed.
        \item For any $\eta>0$, there is a $\eta-$net $N(\eta) = \{y_1, ..., y_{m(\delta)}\}$, i.e., for any $y\in H$, there is $y_k\in N(\eta)$ such that $\|y-y_k\|<\eta$.
    \end{enumerate}
\end{lemma}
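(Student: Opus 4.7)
The plan is to split the equivalence into its two directions and to recognize that condition 2 is precisely the standard notion of total boundedness, so that the statement reduces to the textbook characterization that in a complete metric space a set is compact if and only if it is closed and totally bounded. The Banach structure of $Y$ enters only through completeness; every other part of the argument is purely metric.

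For the forward direction, I assume $H$ is compact. Closedness is immediate because any compact subset of the Hausdorff space $Y$ is closed, which gives condition 1. For condition 2, I fix $\eta>0$ and consider the open cover of $H$ by the balls $\{B(y,\eta) : y\in H\}$. Compactness extracts a finite subcover, and the centers $y_1,\ldots,y_{m(\eta)}$ of those finitely many balls serve as the required $\eta$-net.

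For the reverse direction, I pass to sequential compactness, which is equivalent to compactness in a metric space. Given a sequence $(x_n)\subset H$, I apply condition 2 with $\eta=1/k$ for each $k\geq 1$. At stage $k$ the pigeonhole principle ensures that infinitely many terms of the subsequence produced at stage $k-1$ lie in a single ball of radius $1/k$, from which I extract a nested subsequence $(x_n^{(k)})_n$. The diagonal subsequence $(x_k^{(k)})_k$ is then Cauchy, since for all $j,k\geq k_0$ its terms lie within distance $2/k_0$ of each other. Completeness of $Y$ forces this Cauchy sequence to converge to some $y^*\in Y$, and since $H$ is closed by condition 1, $y^*\in H$, proving that every sequence in $H$ has a subsequence convergent in $H$.

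The only delicate step is the diagonal construction: one must keep careful track of the nesting and verify that from stage $k$ onward the diagonal sequence sits inside a ball of radius $1/k$, which is what produces the Cauchy property. No idea beyond the standard metric-space toolkit is needed, and this lemma will be used downstream purely as an explicit constructive restatement of compactness that yields finite $\eta$-nets inside the input function space $V$, which is exactly what the approximation arguments leading to \eqref{chen_u_k} require.
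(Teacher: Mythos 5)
Your proof is correct: it is the standard characterization of compact subsets of a complete metric space as the closed and totally bounded ones, with the forward direction via finite subcovers of $\eta$-balls and the reverse direction via the pigeonhole/diagonal extraction of a Cauchy subsequence, completeness of $Y$, and closedness of $H$. The paper itself gives no proof of this lemma (it is imported verbatim from the cited reference), so there is nothing to contrast with; your argument is the expected one and the details, including the $2/k_0$ Cauchy estimate for the diagonal subsequence, check out.
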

We recall some properties of compact subsets of continuous functions.
\begin{lemma}[Lemma 6 from \cite{chen1995universal}]
    $V\subset C(H)$ is compact set in $C(H)$, then it is uniformly bounded and equicontinuous, i.e., 
    \begin{enumerate}
        \item There is a constant $A>0$ such that $\|u(y)\|_{C(H)}\leq A$ for all $u\in V$.
        \item For all $\epsilon>0$, there exists $\delta>0$ such that $|u(y') - u(y'')|<\epsilon$ for all $u\in V$ provided $\|y'-y''\|<\delta$.
    \end{enumerate}
    \label{chen_lemma6}
\end{lemma}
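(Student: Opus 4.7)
The plan is to prove each of the two conclusions separately, leveraging the finite $\eta$-net characterization from the immediately preceding lemma (Lemma 5 from \cite{chen1995universal}) to reduce statements about the whole family $V$ to statements about finitely many functions at a time.

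For the uniform boundedness claim, I would fix any $u_0\in V$ and observe that the map $u\mapsto \|u-u_0\|_{C(H)}$ is continuous on $V$. Since $V$ is compact, this real-valued continuous function attains its maximum, so there is a finite constant $M=\sup_{u\in V}\|u-u_0\|_{C(H)}$. Then the triangle inequality yields $\|u\|_{C(H)}\le \|u_0\|_{C(H)}+M=:A$ uniformly in $u\in V$. Alternatively, one can invoke the finite $1$-net from Lemma 5 directly: cover $V$ by finitely many balls of radius $1$ centered at continuous functions (each of which is bounded because $H$ is compact) and take $A$ to be the maximum of the centers' sup-norms plus $1$.

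For the equicontinuity claim, the main idea is a standard $3\epsilon$-argument. Fix $\epsilon>0$. By Lemma 5 applied with $\eta=\epsilon/3$, choose a finite $\epsilon/3$-net $N(\epsilon/3)=\{u_1,\dots,u_{m}\}\subset V$. Each individual $u_k$ is a continuous function on the compact set $H$, hence uniformly continuous; thus there is $\delta_k>0$ with $|u_k(y')-u_k(y'')|<\epsilon/3$ whenever $\|y'-y''\|<\delta_k$. Set $\delta=\min_{1\le k\le m}\delta_k>0$. For an arbitrary $u\in V$, pick $u_k\in N(\epsilon/3)$ with $\|u-u_k\|_{C(H)}<\epsilon/3$, and for any $y',y''\in H$ with $\|y'-y''\|<\delta$ apply the triangle inequality
\begin{equation*}
|u(y')-u(y'')|\le |u(y')-u_k(y')|+|u_k(y')-u_k(y'')|+|u_k(y'')-u(y'')|<\tfrac{\epsilon}{3}+\tfrac{\epsilon}{3}+\tfrac{\epsilon}{3}=\epsilon.
\end{equation*}
Since the chosen $\delta$ is independent of $u$, this yields equicontinuity of the whole family $V$.

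The only nontrivial step is the equicontinuity argument, and the key enabling fact is the existence of a \emph{finite} $\eta$-net provided by Lemma 5; without finiteness one could not take the minimum of the $\delta_k$'s and retain $\delta>0$. Compactness of $H$ is used only to upgrade pointwise continuity of each $u_k$ to uniform continuity. Beyond these two ingredients the argument is a routine triangle-inequality chase, so no further technical obstacles are anticipated.
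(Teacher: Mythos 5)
Your proof is correct. The paper itself states this lemma without proof, citing it directly from \cite{chen1995universal}, and your argument --- uniform boundedness from continuity of the norm on the compact set $V$ (or a finite $1$-net), and equicontinuity via a finite $\epsilon/3$-net combined with uniform continuity of each net element on the compact set $H$ --- is exactly the standard Arzel\`a--Ascoli-type argument used in that reference, so there is nothing to correct or compare further.
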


\begin{remark}
    Let $f$ be a continuous functional on $V$.
Pick a sequence $\epsilon_1>\epsilon_2>...>\epsilon_n\rightarrow 0$, there exists another sequence $\delta_1>\delta_2>...>\delta_n>0$, such that,
$|f(u)-f(v)|<\epsilon_k$, for all $|u-v|<\delta_k$.
By Lemma \ref{chen_lemma6}, there exists a sequence $\eta_1>\eta_2>...>\eta_n\rightarrow 0 $ such that
$|u(y') - u(y'')|<\delta_k$ for all $\|y'-y''\|<\eta_k$ and $u\in V$.
\label{remark_eta_net}
\end{remark}

We can find a sequence $\{z_i\}_{i = 1}^{\infty}\subset H$ and a sequence $m(\eta_1)<m(\eta_2)<...<m(\eta_n)$ such that the first $m(\eta_k)$ elements $N(\eta_k) = \{z_1, ..., z_{m(\eta_k)}\}$ is a $\eta_k$-net of $H$.
For each $\eta_k$-net, and $z_j\in N(\eta_k)$, define a function,
\begin{align*}
    T^{*}_{k, j}(y) = 
    \begin{cases}
        1-\frac{\|y-z_j\|_{H}}{\eta_k}, \|y - z_j\|_{H} \leq \eta_k,\\
        0, \text{ otherwise },
    \end{cases}
\end{align*}
where $y\in H$. Next, we define,
\begin{align}
    T_{k, j}(y) = \frac{T^*_{k, j}(y)}{\sum_{j = 1}^{m(\eta_k)}T^*_{k, j}(y)},
    \label{t_kj}
\end{align}
and a matrix $T^k\in\mathbb{R}^{m(\eta_k)\times m(\eta_k)}$, where the $(i,j)^\text{th}$-entry of $T^k$ is $T_{k, i}(z_j)$.
% It is not hard to verify $\{T_{k, j}(\cdot)\}$ is a partition of unity. 
For any $u\in V$, we define a function,
\begin{align}
    u_{k}(y) = \sum_{j = 1}^{m(\eta_k)}u(z_j)T_{k, j}(y),
    \label{chen_u_k}
\end{align}
and set $\hat{u}^k_z = [u(z_1), ..., u(z_k)]^{\intercal}$. Furthermore, we define $V_k = \{u_k: u\in V\}$ and $\Tilde{V}  = V\cup \left(\cup{k = 1}^\infty V_k\right)$. The next lemma establishes the approximation of $u$ by $u_k$.

\begin{lemma}[Lemma 7 from \cite{chen1995universal}]
    For any $u\in V=C(K_1)$, and $\delta_k>0$, there exists a $\eta_k$-net $N(\eta_k)\subset K_1$, and $u_k$ defined as in equation ($\ref{chen_u_k}$) such that,
    \begin{align*}
        \|u - u_k\|_{C(K_1)}<\delta_k.
    \end{align*}
    \label{chen_lemma7}
\end{lemma}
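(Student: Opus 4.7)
The plan is to combine the equicontinuity of $V$ supplied by Lemma \ref{chen_lemma6} with the partition-of-unity structure of the weights $T_{k,j}$. Given any target $\delta_k > 0$, equicontinuity furnishes a modulus $\eta_k > 0$ such that \emph{every} $u \in V$ satisfies $|u(y') - u(y'')| < \delta_k$ whenever $\|y' - y''\| \leq \eta_k$; compactness of $K_1$ then yields a finite $\eta_k$-net $N(\eta_k) = \{z_1, \ldots, z_{m(\eta_k)}\}$, which is precisely the data used to build $T^*_{k,j}$ and the normalized $T_{k,j}$ in (\ref{t_kj}). The uniformity of $\eta_k$ over $u \in V$ is essential here, and is exactly what Lemma \ref{chen_lemma6} delivers.

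Before estimating the error I would verify two structural facts about $\{T_{k,j}\}_{j=1}^{m(\eta_k)}$. First, the denominator $\sum_{j} T^*_{k,j}(y)$ is strictly positive on $K_1$, because the $\eta_k$-net property forces at least one $z_j$ with $\|y - z_j\| < \eta_k$, making that summand strictly positive; hence each $T_{k,j}$ is continuous on $K_1$. Second, by construction $\sum_{j} T_{k,j}(y) = 1$ and $T_{k,j}(y) = 0$ whenever $\|y - z_j\| > \eta_k$, so $\{T_{k,j}\}$ is a continuous partition of unity subordinate to the cover by $\eta_k$-balls around the $z_j$. Using $\sum_{j} T_{k,j}(y) = 1$ and the definition of $u_k$ in (\ref{chen_u_k}), one writes
\[
u(y) - u_k(y) \;=\; \sum_{j=1}^{m(\eta_k)} \bigl( u(y) - u(z_j) \bigr)\, T_{k,j}(y),
\]
so that only indices with $\|y - z_j\| \leq \eta_k$ contribute. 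For those indices equicontinuity gives $|u(y) - u(z_j)| < \delta_k$, and the triangle inequality together with $\sum_j T_{k,j}(y) = 1$ yields $|u(y) - u_k(y)| < \delta_k$ pointwise; since $K_1$ is compact and $y \mapsto |u(y) - u_k(y)|$ is continuous, the supremum is attained, and the strict pointwise bound upgrades to $\|u - u_k\|_{C(K_1)} < \delta_k$.

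There is no substantial obstacle in the argument; the main points requiring care are (i) checking that the denominator in (\ref{t_kj}) is strictly positive on all of $K_1$, which is exactly the $\eta_k$-net property, and (ii) preserving strict inequality in the final bound, which is handled by compactness of $K_1$ (alternatively one can invoke equicontinuity at level $\delta_k/2$ before summing). The only conceptual subtlety is that a single net $N(\eta_k)$ must work for every $u \in V$ simultaneously, which is what forces the appeal to Lemma \ref{chen_lemma6} rather than to the continuity of an individual $u$.
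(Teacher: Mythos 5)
Your argument is correct and matches the intended proof (the paper defers to Lemma 7 of \cite{chen1995universal}, whose proof is precisely this partition-of-unity argument, and the surrounding setup --- the definitions in (\ref{t_kj}), (\ref{chen_u_k}) and the equicontinuity-based choice of $\eta_k$ in Remark \ref{remark_eta_net} --- anticipates exactly the steps you carry out). The points you flag, namely strict positivity of the denominator via the net property and the uniformity of $\eta_k$ over the compact set $V$ (which is why $V$ must be read as a compact subset of $C(K_1)$, not all of it), are indeed the only places requiring care.
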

The next lemma establishes the universal approximation to continuous functional using a two layer networks.
\begin{lemma}[Theorem 4 from \cite{chen1995universal}]
Suppose that $g\in TW$, $Y$ is a Banach space, $K_1\subset Y$ is compact, and $V\subset C(K_1)$ is also compact.
    Let $f$ be a continuous functional on $V$. For any $\epsilon>0$, there exist integers $I, C>0$, weight and bias $W\in\mathbb{R}^{I\times C}$ and $c\in\mathbb{R}^I$, $b\in\mathbb{R}^I$ and $\hat{z} = [z_1, ...., z_C]$ with $z_i\in K_1$ such that,
    \begin{align*}
        \left|f(u) - c^{\intercal}g(W\hat{u}+b) \right| <\epsilon,
    \end{align*}
    for all $u\in V$, and $\hat{u}_z = [u(z_1), ..., u(z_C)]^{\intercal}$.
    \label{chen_theorem4}
\end{lemma}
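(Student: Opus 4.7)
The plan is to reduce the infinite-dimensional functional approximation problem to a classical finite-dimensional one, using the sampling approximation $u_k$ already constructed in the excerpt. First I would fix $\epsilon>0$ and, using continuity of $f$ on $V$, pick $\delta>0$ so that $\|u-v\|_{C(K_1)}<\delta$ forces $|f(u)-f(v)|<\epsilon/2$. By Remark~\ref{remark_eta_net} and Lemma~\ref{chen_lemma7}, I can then choose an index $k$ large enough that the corresponding $\eta_k$-net $N(\eta_k)=\{z_1,\dots,z_{m(\eta_k)}\}$ produces the piecewise approximant $u_k$ from \eqref{chen_u_k} with $\|u-u_k\|_{C(K_1)}<\delta$, uniformly in $u\in V$. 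Set $C:=m(\eta_k)$ and $\hat z=[z_1,\dots,z_C]$. Note that the interpolants $u_k$ may not lie in $V$ itself, so the relevant domain of approximation is the enlarged compact set $\tilde V=V\cup(\bigcup_k V_k)$; on this set $f$ must first be extended (or, equivalently, one defines the proxy functional directly on sampled vectors, as in the next step).

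Next I would observe that, by construction, $u_k$ depends on $u$ only through the sample vector $\hat u_z=[u(z_1),\dots,u(z_C)]^{\intercal}\in\mathbb{R}^C$: explicitly, $u_k(y)=\sum_{j=1}^C \hat u_z(j)\,T_{k,j}(y)$ with $T_{k,j}$ defined in \eqref{t_kj}. Therefore I can define a map
\begin{equation*}
F:\mathcal{S}\to\mathbb{R},\qquad F(\hat u_z):=f\!\left(\sum_{j=1}^C \hat u_z(j)\,T_{k,j}(\cdot)\right),
\end{equation*}
on the set $\mathcal{S}=\{\hat u_z:u\in V\}\subset\mathbb{R}^C$. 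Since $V$ is compact in $C(K_1)$ and the evaluation functionals $u\mapsto u(z_j)$ are continuous, $\mathcal{S}$ is compact in $\mathbb{R}^C$. The map $\hat u_z\mapsto u_k$ from $\mathbb{R}^C$ into $C(K_1)$ is linear and continuous (the $T_{k,j}$ are bounded continuous functions), and $f$ is continuous on its domain, so $F$ is a continuous real-valued function on the compact set $\mathcal{S}$.

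I would then invoke the classical finite-dimensional universal approximation theorem (Lemma~\ref{chen_theorem3} with $H=\mathcal{S}\subset\mathbb{R}^C$ and target $F\in C(\mathcal{S})$, since $g\in TW$): there exist integer $I>0$, rows $W_i\in\mathbb{R}^{C}$, biases $b_i\in\mathbb{R}$, and coefficients $c_i\in\mathbb{R}$ such that
\begin{equation*}
\bigl|F(\hat v)-c^{\intercal}g(W\hat v+b)\bigr|<\epsilon/2\qquad\text{for all }\hat v\in\mathcal{S},
\end{equation*}
where $W$ has rows $W_i$, $b=(b_i)$, $c=(c_i)$. Finally, for any $u\in V$, the triangle inequality gives
\begin{equation*}
|f(u)-c^{\intercal}g(W\hat u_z+b)|\leq |f(u)-f(u_k)|+|F(\hat u_z)-c^{\intercal}g(W\hat u_z+b)|<\tfrac{\epsilon}{2}+\tfrac{\epsilon}{2}=\epsilon,
\end{equation*}
since $F(\hat u_z)=f(u_k)$. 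This yields the desired approximation.

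The main obstacle is the bridging step of turning $f$, a functional defined intrinsically on $V\subset C(K_1)$, into a bona fide continuous function on a compact subset of $\mathbb{R}^C$: one has to ensure that the proxy sampling map $u\mapsto\hat u_z\mapsto u_k$ is uniformly controlled (delivered by the equicontinuity from Lemma~\ref{chen_lemma6} and Lemma~\ref{chen_lemma7}) and that $f$ is defined on the interpolants $u_k$, which generally lie in $\tilde V\supsetneq V$. Once this finite-dimensional reduction is justified, the remainder reduces to a direct appeal to the TW-function approximation on $\mathbb{R}^C$.
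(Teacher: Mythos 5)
Your proposal is correct and follows essentially the same route as the proof the paper defers to (Theorem 4 of \cite{chen1995universal}, whose key ingredients are sketched in Remark \ref{remark1}): reduce to the sample vector $\hat u_z$ via the interpolant $u_k$ of \eqref{chen_u_k}, extend $f$ to $\tilde V$ by Tietze so that $\tilde f(u_k)$ is defined, and apply the finite-dimensional Tauber--Wiener approximation to the induced continuous function on the compact set $\{\hat u_z : u\in V\}\subset\mathbb{R}^C$. You correctly flag the one genuine subtlety (that $u_k\notin V$ in general, necessitating the extension), which is exactly the point the paper emphasizes.
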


\begin{remark}
The proof appears in Theorem 4 from \cite{chen1995universal}, and we discuss an important remark regarding the theorem.
The sensors $\{z_i\}_{i = 1}^C$ are the evaluation points for the input function space $V$. They form an $\eta_k-$net of $K_1$, i.e., 
$\{z_i\}_{i = 1}^C = N(\eta_k) = \{z_1, ..., z_{m(\eta_k)}\}$, where we denote $C = m(\eta_k)$. The sensors $\{z_i\}_{i = 1}^C$, constant $C$ and the $\eta_k-$net are determined as follows. For any $\epsilon>0$, choose $m(\eta_k)$ large enough such that $\|u - u_{k}\|_{C(Y)}<\delta_k$ implies $|\Tilde{f}(u) - \Tilde{f}(u_{k})|<\epsilon/2$. 
Here $\Tilde{f}\in \Tilde{V}$ is the extension of $f$ by the Tietze Extension Theorem, i.e.,
\begin{align*}
    f(w)  = \Tilde{f}(w), \forall w\in V.
\end{align*}
One can then define $\eta_k$, and $\eta_k-$net (the sensors $\{z_i\}_{i = 1}^C$) as in Remark \ref{remark_eta_net}.
We will use the sensors $\{z_i\}_{i = 1}^C$ from \cite{chen1995universal} to establish the universal approximation theorem for BelNet.
\label{remark1}
\end{remark}

To prove the universal approximation theorem of BelNet, the key is to show there is a neural network that can map the function values at arbitrary sensors to $\hat{u}_z$. In Lemma~\ref{key_lemma}, we show a strategy for selecting a set of appropriate sensors and then prove the existence of the neural network.
\begin{lemma}
    Let $\hat{z}$ and $C$ be defined from Lemma \ref{chen_theorem4}. For any $\epsilon_u>0$, there exist \textcolor{black}{integers} $N, I>0$, $K_y\subset K_1^N$, and neural networks $\mathcal{N}^i: K_y\rightarrow \mathbb{R}^{C}$,
    \begin{align*}
        \mathcal{N}^i(\hat{y}) = W_2^ia(W_1^i\hat{y} + b_1^i), \quad i\in[N]
    \end{align*}
    and $\mathcal{N}: K_y\rightarrow\mathbb{R}^{C\times N }$ defined as $\mathcal{N}(\hat{y}) = [\mathcal{N}^1(\hat{y}), ..., \mathcal{N}^{N}(\hat{y})]$, such that
    \begin{align*}
        \|\hat{u}_z - \mathcal{N}(\hat{y})u(\hat{y})\|_F<\epsilon,
    \end{align*}
    where $W_1^i\in \mathbb{R}^{I\times N}$, $W_2^i\in \mathbb{R}^{C\times I}$, and for any $\hat{y} = [y_1, ..., y_N]^{\intercal}\in K_y$.
    \label{key_lemma}
\end{lemma}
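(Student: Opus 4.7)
The plan is to construct the target matrix $\mathcal{N}(\hat{y})$ as a partition-of-unity interpolation operator that resamples values from an arbitrary sensor grid $\hat{y}$ onto the fixed grid $\hat{z}$, and then to realize its columns as two-layer networks using classical universal approximation on a (compact) sensor domain $K_y$.

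First, I would use the equicontinuity half of Lemma~\ref{chen_lemma6} to pick $\eta>0$ such that $\|y'-y''\|_Y<\eta$ forces $|u(y')-u(y'')|<\epsilon_u/(2\sqrt{C})$ uniformly in $u\in V$, and I would choose $N$ so that $K_1$ admits a cover by $N$ open balls of radius $\eta/2$. I would then set
\begin{align*}
K_y=\{\hat{y}=(y_1,\ldots,y_N)\in K_1^N : \forall k\in[C],\ \exists i\in[N]\ \text{with}\ \|y_i-z_k\|_Y<\eta/2\},
\end{align*}
which is nonempty and compact. Intuitively, $K_y$ restricts attention to admissible sensor configurations: those that cluster at least one sensor near every fixed evaluation point $z_k$. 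This is the precise analogue, for the arbitrary sensors $\hat{y}$, of imposing the $\eta$-net condition that appears in Lemma~\ref{chen_lemma7}, and it is what will guarantee that partition-of-unity weights centered at the $z_k$'s are well-defined and continuous.

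Mimicking the construction of the $T^*_{k,j}$'s from~(\ref{t_kj}) but with the roles of $y$ and $z$ swapped, I would set $\widetilde T_{k,i}(\hat{y})=\max(0,1-\|z_k-y_i\|_Y/\eta)$ and normalize $T_{k,i}(\hat{y})=\widetilde T_{k,i}(\hat{y})/\sum_j \widetilde T_{k,j}(\hat{y})$. On $K_y$ the denominator is at least $1/2$, so each $T_{k,i}$ is continuous on $K_y$, and $T_{k,i}(\hat{y})>0$ implies $\|y_i-z_k\|_Y<\eta$. A convex-combination bound together with equicontinuity then yields
\begin{align*}
\Bigl|u(z_k)-\sum_{i=1}^N T_{k,i}(\hat{y})\,u(y_i)\Bigr|\leq \max_{i:\,T_{k,i}(\hat{y})>0}|u(z_k)-u(y_i)| < \frac{\epsilon_u}{2\sqrt{C}},
\end{align*}
so if $T(\hat{y})\in\mathbb{R}^{C\times N}$ denotes the full matrix of weights, $\|\hat{u}_z-T(\hat{y})u(\hat{y})\|_F\leq\epsilon_u/2$ uniformly in $u\in V$ and $\hat{y}\in K_y$.

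With the target matrix in hand, for each $i\in[N]$ the $i$-th column of $T(\hat{y})$ is a continuous map $K_y\to\mathbb{R}^C$, so the standard single-hidden-layer universal approximation theorem (applied componentwise with a TW activation $a$) yields an integer $I$ and weights $W_1^i\in\mathbb{R}^{I\times N}$, $W_2^i\in\mathbb{R}^{C\times I}$, $b_1^i\in\mathbb{R}^{I}$ with $\sup_{\hat{y}\in K_y}\|\mathcal{N}^i(\hat{y})-T_{\cdot,i}(\hat{y})\|_\infty<\epsilon_u/(2AN\sqrt{C})$, where $A$ is the uniform bound on $V$ from Lemma~\ref{chen_lemma6}. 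A triangle inequality together with $|u(y_i)|\leq A$ then gives
\begin{align*}
\|\hat{u}_z-\mathcal{N}(\hat{y})u(\hat{y})\|_F \leq \|\hat{u}_z-T(\hat{y})u(\hat{y})\|_F + \sum_{i=1}^N |u(y_i)|\,\|\mathcal{N}^i(\hat{y})-T_{\cdot,i}(\hat{y})\|_F \leq \epsilon_u.
\end{align*}
The real obstacle is the design of $K_y$: one needs a positive, configuration-uniform lower bound on the partition-of-unity denominator so that the interpolation weights are truly continuous on a single compact domain, independent of the function $u$. Once that is secured, the rest is a routine combination of equicontinuity and classical approximation on a finite-dimensional compact set.
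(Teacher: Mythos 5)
Your proof is correct in substance, but it takes a genuinely different route from the paper's. The paper routes everything through the fixed $\eta_k$-net $\{r_j\}$ and the interpolant $u_k$ of Lemma~\ref{chen_lemma7}: it forms the two evaluation matrices $T_y(\hat{y})$ and $T_z(\hat{z})$ of the partition functions $T_{k,j}$, solves $v(\hat{y})T_y(\hat{y})=T_z(\hat{z})$ by least squares via $v(\hat{y})=T_z(\hat{z})M^{-1}(\hat{y})T_y^{\intercal}(\hat{y})$ with $M=T_y^{\intercal}T_y$, defines $K_y$ by the invertibility of $M(\hat{y})$ together with an explicit operator-norm bound on $v(\hat{y})$, and splits the error as $\big(\mathcal{N}-v\big)u(\hat{y})+v\big(u(\hat{y})-u_k(\hat{y})\big)+\big(u_k(\hat{z})-u(\hat{z})\big)$, so that the quality of the approximation is controlled by $\|u-u_k\|$ and the imposed bound on $\|v\|$. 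You instead build the resampling matrix directly as a partition of unity centered at the $z_k$'s, define $K_y$ by the geometric covering condition that every $z_k$ has a sensor within $\eta/2$, and control the error by equicontinuity of $V$ alone. Your construction buys two things: the admissibility condition on $\hat{y}$ is transparent and checkable (a covering condition rather than invertibility of a Gram matrix plus a norm bound that the paper must separately show is attainable in Remark~\ref{remark_nonempty}), and your weights are row-stochastic, so $\|T(\hat{y})\|$ is bounded for free. The paper's construction buys consistency with the $K_y$ of Equation~(\ref{ky_assumption}) that is cited again in the main theorem, and it does not require any sensor to land near a prescribed $z_k$ --- only that the configuration resolve the $\eta_k$-net in a least-squares sense. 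Two small repairs to your argument: define $K_y$ with non-strict inequalities ($\|y_i-z_k\|\leq\eta/2$) so that it is closed in $K_1^N$ and hence compact, which you need before invoking the classical universal approximation theorem; and note that nonemptiness requires $N\geq C$, which is most easily seen by taking $y_i=z_i$ for $i\leq C$ and padding arbitrarily, rather than via a covering of all of $K_1$.
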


\begin{proof}

For any $\delta >0$, by Lemma \ref{chen_lemma7}, there is a sufficiently large integer $C_\delta$ such that that 
$\|u - u_k\|_{C(K_1)}<\delta$. Here $u_k(y) = \sum\limits_{j = 1}^{m(\eta_k)}u(r_j)T_{k, j}(y)$ is defined as (\ref{chen_u_k}) and $m(\eta_k) = C_{\delta}$. Moreover we denote $\hat{r} = [r_1, ..., r_{C_{\delta}}]^\intercal$.

For any $N>0$ and $\hat{y}=[y_1,\dots,y_N]^{\intercal}\in K_1^N$ we can define two continuous operators $T_y: K_1^N\rightarrow \mathbb{R}^{N\times C_{\delta}}$ and $T_z:  K_1^C\rightarrow \mathbb{R}^{C\times C_{\delta}}$ as
\begin{align*}
T_y(\hat{y}) = 
    \begin{pmatrix}
T_{k, 1}(y_1) & ... & T_{k, C_{\delta}}(y_1)\\
... & ... & ...\\
T_{k, 1}(y_N) & ... & T_{k, C_{\delta}}(y_N)
\end{pmatrix}, \quad
T_z(\hat{z}) = 
    \begin{pmatrix}
T_{k, 1}(z_1) & ... & T_{k, C_{\delta}}(z_1)\\
... & ... & ...\\
T_{k, 1}(z_C) & ... & T_{k, C_{\delta}}(z_C)
\end{pmatrix},
\end{align*}
where $T_{k, j}$ is defined in Equation (\ref{t_kj}). For any fixed $\epsilon_u$, $\delta$, and $N$, we want to construct a subset $K_y\subset K_1^N$, a continuous $v: K_y\rightarrow \mathbb{R}^{C\times N}$, such that,
 \begin{align}
     v(\hat{y})T_{y}( \hat{y} ) = T_{z}(\hat{z}),
     \label{vT_y_T_z_transformation}
 \end{align}
Let us define $M(\hat{y}) = T_{y}^{\intercal}(\hat{y})T_{y}(\hat{y})$ and set
\begin{align*}
    v(\hat{y}) = T_z(\hat{z})M^{-1}(\hat{y})T^{\intercal}_y(\hat{y})
\end{align*}
for any $\hat{y}\in K_y$. We then define a subset $K_y\subset K_1^N$ as,
\begin{align}
    K_y = \left\{\hat{y}\in K_1^N, \text{$M(\hat{y})$ is invertible and $\|v(\hat{y})\|\leq \frac{\epsilon_u}{2\sqrt{C\delta^2}}-1$} \right\},
    \label{ky_assumption}
\end{align}
where $\|\cdot\|$ is the matrix operator norm. We remark that, for fixed $\epsilon_u$ and $C$, the set $K_y$ is nonempty when $\delta>0$ is sufficiently small and $N$ is sufficiently large (see Remark \ref{remark_nonempty}).

Denote $C_v = \sup_{\forall u\in V}\|u\|_{V}$, it follows from the universal approximation theorem for functions \cite{cybenko1989approximation} that, for any $\frac{\epsilon_u}{2 \sqrt{N C_v^2}}>0$, there exist neural networks $\mathcal{N}^i$ of the form $W_2^i\sigma(W_1^iy+b_1^i)$ and $\mathcal{N}(y) = [\mathcal{N}^1(y), ..., \mathcal{N}^{N}(y)]$ such that,
\begin{align}
    \|v(y) - \mathcal{N}(y)\|_{C(K_1)}<\frac{\epsilon_u}{2 \sqrt{NC_v^2}}.
    \label{uap_for_v}
\end{align}

For $\hat{y} = [y_1, ..., y_N]^{\intercal}\in K_y$ and $u_k(y) = \sum\limits_{j = 1}^{C_{\delta}}u(r_j)T_{k, j}(y)$, by multiplying both sides of $u_k$ by $v(\hat{y})$, it follows that,
\begin{align}
    v(\hat{y})u_{k}(\hat{y}) = \sum_{j = 1}^{C_{\delta}}u(r_j)v(\hat{y})T_{k, j}(\hat{y})= \sum_{j = 1}^{C_{\delta}}u(r_j)T_{k, j}(\hat{z}_k) = u_k(\hat{z}).
    \label{eqn_vuy_uz}
\end{align}
By equation (\ref{eqn_vuy_uz}) and Cauchy-Schwartz, we have the bound:
        \begin{align*}
        \left\|\mathcal{N}(\hat{y})u(\hat{y})-u(\hat{z}) \right\|_F &= \left\| \big(\mathcal{N}(\hat{y})-v(\hat{y})+v(\hat{y})\big)u(\hat{y}) - u(\hat{z}) \right\|_F\\
        % & = \left| \big(\mathcal{N}(\hat{y})-v(\hat{y})\big)u(\hat{y})+v(\hat{y})u(\hat{y}) - u(\hat{z}_k)  \right |_F\\
        & = \left\| \big(\mathcal{N}(\hat{y})-v(\hat{y})\big)u(\hat{y}) + v(\hat{y})\big(u(\hat{y}) - u_{k}(\hat{y})+u_{k}(\hat{y})\big) - u(\hat{z})  \right\|_F\\
        % & = \left| \big(\mathcal{N}(\hat{y})-v(\hat{y})\big)u(\hat{y}) + v(\hat{y})\big(u(\hat{y})-u_{k}(\hat{y})\big) + v(\hat{y})u_{k}(\hat{y}) - u(\hat{z}_k)  \right |\\
        & \leq \left\| \big(\mathcal{N}(\hat{y})-v(\hat{y})\big)u(\hat{y})  \right \|_F + 
        \left\| v(\hat{y})\big(u(\hat{y})-u_{k}(\hat{y})\big)  \right\|_F + 
        \left\| u_k(\hat{z}) - u(\hat{z})  \right\|_F\\
        &\leq \|\mathcal{N}(\hat{y}) - v(\hat{y})\|\|u(\hat{y})\|_F + (\|v(\hat{y})\|+1)\sqrt{C\delta^2}.
    \end{align*}
    Utilizing (\ref{ky_assumption}) and (\ref{uap_for_v}), the estimation follows.

\end{proof}
\begin{remark}
\label{remark_nonempty}
    We present one example to show $K_y$ is non-empty. Let  $\hat{y} = [\hat{r}, \hat{r}, ... \hat{r}]$, where we repeat $\hat{r}$ $n$ times, and define $T_y(\hat{y})$ by,
    \begin{align*}
        T_y(\hat{y}) = \begin{pmatrix}
            T_r\\
            ...\\
            T_r
        \end{pmatrix},
        \text{ where }
        T_r = T_y(\hat{r}).
    \end{align*}
We have $M = T^{\intercal}_yT_y = nT_r^{\intercal}T_r$. Thus if $T_r$ has full column rank $C_\delta$, then the matrix $M$ is invertible and it follows that
\begin{align*}
    M^{-1}T^{\intercal}_y = \frac{1}{n}[(T_r^{\intercal}T_r)^{-1}T_r^{\intercal}, ..., (T_r^{\intercal}T_r)^{-1}T_r^{\intercal} ].
\end{align*}
We estimate the operator norm of $M^{-1}T^{\intercal}_y$ by studying its largest singular value $\sigma_1$. We have,
$M^{-1}T^{\intercal}_y(M^{-1}T^{\intercal}_y)^{\intercal} = \frac{1}{n}(T_r^\intercal T_r)^{-1}$ which implies that $\|M^{-1}T^{\intercal}_y\| = \sigma_1\leq  \sqrt{\frac{1}{n}\|(T_r^\intercal T_r)^{-1} \|}$. By letting $n$ be large enough, $\|v\| = \|T_z\|\|M^{-1}T^{\intercal}_y\|$ can be sufficiently small, and thus $K_y$ is non-empty.
\end{remark}

\begin{remark}
    $M(y) = T_{y}^{\intercal}(y)T_{y}(y)$, this implies that $rank(M) = rank(T_y)\leq \min(C_{\delta}, N)$. Since $M\in\mathbb{R}^{C_{\delta}\times C_{\delta}}$, $M$ is singular if $N<C_{\delta}$. 
\end{remark}

% \begin{remark}
%     \textcolor{blue}{Choose $N=C$ and $\hat{y} = \hat{z}$, then $K_y$ in $\ref{ky_assumption}$ is not empty. Moreover, as $\|v(y)\|$ operator norm is decreasing as $N$ is increasing, $K_y$ enlarges as $N$ grows.}
% \end{remark}

\begin{remark}
    $\mathcal{N}$ is the projection net in Figure \ref{fig_network_general}. $\mathcal{N}u$ is the projection coefficients of $u$ onto a set of functions (``basis'') implicitly learned. 
\end{remark}

\begin{theorem}[\textbf{Universal Approximation Theorem for BelNet}]
Suppose that $a\in TW$, $Y$ is a Banach space, $K_1\subset Y$, $K_2\subset \mathbb{R}$ are all compact.
$V\subset C(K_1)$ is compact and $G: V\rightarrow C(K_2)$ is continuous and nonlinear.
For any $\epsilon>0$, there exist integers $N,C,K,I$, weights and biases $W_x^k\in\mathbb{R}^{d}$, $b_x^k\in\mathbb{R}$, $W^k\in\mathbb{R}^{I\times C}$, $b^k\in\mathbb{R}^{I}$, $c^k\in\mathbb{R}^{I}$, subset of sensors $K_y\subset K^N_1$ and a trainable network $\mathcal{N}: K_y\rightarrow \mathbb{R}^{C\times N}$ specified in Lemma \ref{key_lemma}, where $K_y$ satisfies Equation (\ref{ky_assumption}). 
Then the following inequality holds
\begin{align*}
    \left|
G\left(u\right)(x) - \sum_{k = 1}^K a(W_x^k\cdot x+ b^k_x) (c^k)^{\intercal}a\big(W^k\mathcal{N}(\hat{y})u(\hat{y}) + b^k \big)
    \right|<\epsilon,
\end{align*}
for all $x\in K_2$, $\hat{y} = [y_1, y_2, ..., y_N]^{\intercal}\in K_y$, and $u\in V$.
\end{theorem}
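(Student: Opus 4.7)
The plan is to decompose the target expression into three nested approximations, each handled by a previous lemma in the excerpt, and then propagate the errors through the final activation layer.

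\textbf{Step 1 (outer expansion in $x$).} First, I would use a Chen-style universal approximation to separate the variable $x$ from the input function $u$. Concretely, by arguing that $G(V) \subset C(K_2)$ is compact, I can apply Lemma \ref{chen_theorem3} pointwise in $u$ to obtain an expansion
\begin{equation*}
    G(u)(x) \;\approx\; \sum_{k=1}^K F_k(u)\, a\bigl(W_x^k\cdot x + b_x^k\bigr),
\end{equation*}
with error at most $\epsilon/3$ uniformly in $u \in V$ and $x \in K_2$, where each $F_k\colon V\to\mathbb{R}$ is a continuous functional (this is the combination of Theorems 3 and 5 of \cite{chen1995universal} that also underlies the Chen--Chen bound restated at the start of Section \ref{sec_prelim}). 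Here $K$ and the pairs $(W_x^k, b_x^k)$ are fixed by this step.

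\textbf{Step 2 (inner functional approximation).} Next, for each $k$ I would apply Lemma \ref{chen_theorem4} to the continuous functional $F_k$, producing integers $I,C$, fixed sensor tuple $\hat{z}=[z_1,\dots,z_C]^\intercal$, and parameters $W^k, b^k, c^k$ such that
\begin{equation*}
    \bigl| F_k(u) - (c^k)^\intercal a\bigl(W^k \hat{u}_z + b^k\bigr)\bigr| < \frac{\epsilon}{3K\,M_x}
\end{equation*}
for all $u\in V$, where $M_x = \sup_{k,x}\lvert a(W_x^k\cdot x + b_x^k)\rvert$ is finite by compactness of $K_2$. A crucial point, which I want to emphasize via Remark \ref{remark1}, is that the tuple $\hat{z}$ produced here can be chosen as the \emph{same} set of sensors for all $k$ (by taking the union of the per-$k$ sensor sets and padding the corresponding weight matrices $W^k$ with zero columns), so that a single projection network suffices downstream.

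\textbf{Step 3 (replacing fixed sensors by arbitrary ones).} Now I invoke Lemma \ref{key_lemma} with a tolerance $\epsilon_u$ to be chosen: there exists $N$, a subset $K_y \subset K_1^N$ satisfying (\ref{ky_assumption}), and a trainable network $\mathcal{N}$ so that $\|\hat{u}_z - \mathcal{N}(\hat{y})u(\hat{y})\|_F < \epsilon_u$ for all $u\in V$ and $\hat{y}\in K_y$. The activation function $a$ is continuous, and $\mathcal{N}(\hat{y})u(\hat{y})$ ranges over a bounded set (since $V$ is uniformly bounded by Lemma \ref{chen_lemma6} and $\|\mathcal{N}(\hat{y})\|$ is controlled on $K_y$ by (\ref{ky_assumption})), so on this bounded domain $v\mapsto (c^k)^\intercal a(W^k v + b^k)$ is uniformly continuous. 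I would then choose $\epsilon_u$ small enough that
\begin{equation*}
    \bigl| (c^k)^\intercal a(W^k \hat{u}_z + b^k) - (c^k)^\intercal a(W^k \mathcal{N}(\hat{y})u(\hat{y}) + b^k)\bigr| < \frac{\epsilon}{3K\,M_x}
\end{equation*}
for every $k$ and every $u$, $\hat{y}$.

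\textbf{Step 4 (triangle inequality).} Finally, summing over $k$, multiplying by the bounded factor $a(W_x^k\cdot x + b_x^k)$, and combining the three estimates via the triangle inequality yields the stated bound $\epsilon$. The main obstacle is Step 3: one must carefully track that the projection-net approximation error $\epsilon_u$ in $\mathbb{R}^C$ propagates to an error in $\mathbb{R}$ of controllable size after passing through the nonlinear net and then being weighted by the construction net, which requires both the boundedness guarantee (\ref{ky_assumption}) of Lemma \ref{key_lemma} and the uniform continuity of $a$ on compact sets. A secondary subtlety is ensuring that the sensors $\hat{z}$ in Step 2 are shared across all $k$, which allows a \emph{single} projection network $\mathcal{N}$ rather than $K$ separate ones, matching the architecture of Figure \ref{fig_network_general}.
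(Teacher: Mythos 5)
Your proposal is correct and follows essentially the same route as the paper's proof: Lemma \ref{chen_theorem3} for the outer expansion in $x$, Lemma \ref{chen_theorem4} for approximating the resulting functionals at fixed sensors $\hat{z}$, Lemma \ref{key_lemma} to replace $\hat{u}_z$ by $\mathcal{N}(\hat{y})u(\hat{y})$, and uniform continuity of $a$ on a compact set to propagate the $\epsilon_u$-error, assembled by the triangle inequality. Your explicit remark about sharing a single sensor tuple $\hat{z}$ across all $k$ (via unions and zero-padding) is a detail the paper leaves implicit, but it does not change the argument.
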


\begin{proof}
Since $G$ is continuous and $V\subset C(K_1)$ is compact, the range $G(V)$ is also compact in $C(K_2)$.
By Lemma \ref{chen_theorem3},
for any $\epsilon>0$, there exist a positive integer $K$ and linear continuous functional $L_k$, $W_x^k\in\mathbb{R}^{d}$, $b_x^k\in\mathbb{R}$ such that
\begin{align*}
    \left| G\left(u\right)(x) - \sum_{k = 1}^K L_k\big(G(u)\big) a(W_x^k\cdot x+ b^k_x) \right|<\cfrac{\epsilon}{3},
\end{align*}
for all $x\in K_2$ and $u\in V$.
By Lemma \ref{chen_theorem4}, for all $k$, there exist integers $C, I$, $\hat{z} = [z_1, ..., z_C]^{\intercal}$ with $z_i\in K$,
$c^k\in\mathbb{R}^{I}$, $W^k\in\mathbb{R}^{I\times C}$, $b^k\in\mathbb{R}^{I}$ such that 
\begin{align*}
    |L_k\left(G(u)\right) - (c^k)^{\intercal}a(W^k\hat{u}_z + b)|<\frac{\epsilon}{3C_u K},
\end{align*}
where $\hat{u}_z = [u(z_1), ..., u(z_C)]^{\intercal}$ and $C_u=\max\limits_{k,x\in K_2}{a(W^k_x\cdot x+b^k_x)}$. Therefore, we obtain an approximation to $G(u)(x)$ as in \cite{chen1995universal} defined by
\begin{align}
    \mathcal{G}\left(u(\hat{z})\right)(x) = \sum_{k = 1}^K a(W_x^k\cdot x+ b^k_x)(c^k)^{\intercal}a(W^k u(\hat{z})+ b^k)
    \label{eqn_final_eqn2}
\end{align}
with $\Big|\mathcal{G}\left(u(\hat{z})\right)(x)-G(u)(x)\Big|<\cfrac{2\epsilon}{3}$.
Since $a$ is continuous and $K_1^C$ is compact, we can define uniformly continuous \textcolor{black}{functions} $a_k:K_1^C\rightarrow \mathbb{R}$:
\[
a_k(\hat{u}) = a(W^k\hat{u}+b^k).
\]
Thus, there is an $\epsilon_u>0$, such that 
\begin{align}
    |a_k(\hat{u}')-a_k(\hat{u}'')|<\cfrac{\epsilon}{3KLC_u}
    \label{eqn_final_eqn0}
\end{align}
for all $\|\hat{u}'-\hat{u}''\|_F < \epsilon_u$ where $L=\max\limits_k{\|c_k\|_{l^1}}$.

By Lemma \ref{key_lemma}, there exists $N,\mathcal{N}$, and $K_y\subset K^N_1$ such that 
\begin{align}
    \|u(\hat{z})-\mathcal{N}(\hat{y})u(\hat{y})\|_F<\epsilon_u\; \quad \text{for any }y\in K_y.
    \label{eqn_final_approx}
\end{align}
Letting $\hat{y} = [y_1, ..., y_N]\in K^N_1$, the difference is bounded by
\begin{align*}
    &\left| G\left(u\right)(x) - \sum_{k = 1}^K a(W_x^k\cdot x+ b^k_x) (c^k)^{\intercal}a\big(W^k\mathcal{N}(\hat{y})u(\hat{y}) + b^k \big) \right|\\
    & \leq  \underbrace{|G\left(u\right)(x) - \mathcal{G}(u(\hat{z}))(x)|}_{\mathcal{E}_1}\\
    &+ \underbrace{\left|\mathcal{G}(u(\hat{z}))(x)
    -\sum_{k = 1}^K a(W_x^k\cdot x+ b^k_x) (c^k)^{\intercal}a\big(W^k\mathcal{N}(\hat{y})u(\hat{y}) + b^k \big) \right|}_{\mathcal{E}_2}.
\end{align*}
By equations \ref{eqn_final_eqn0}, \ref{eqn_final_eqn2} and \ref{eqn_final_approx}, the second term $\mathcal{E}_2$ is controlled by: 
\begin{align}
    \mathcal{E}_2 &= \left| \sum_{k = 1}^K  a(W_x^k\cdot x+ b^k_x) (c^k)^{\intercal} \left(a_k(\hat{z})-a_k(\mathcal{N}(\hat{y})u(\hat{y}))\right) \right| <\frac{\epsilon}{3},
\end{align}
and the total approximation follows accordingly.
\end{proof}

\section{Numerical Experiments}

\label{sec_numerical}

We apply our approach to a nonlinear scalar PDE and multiscale PDE problems. Specifically, we first test the proposed BelNet extension on the viscous Burgers' equation. Then we show that BelNet can be used to address some of the difficulties associated with learning multiscale operators. \textcolor{black}{The code and examples will be available when the work is published.}

\subsection{Parametric Viscous Burgers' Equation}
Consider the viscous Burgers' equation with periodic boundary conditions: 
\begin{align*}
    &\frac{\partial u_s}{\partial t} + \frac{1}{2}\frac{\partial (u^2_s)}{\partial x} = \alpha \frac{\partial^2 u_s}{\partial x^2},\hspace{0.5em} x\in[0, 2\pi], \hspace{0.2em} t\in[0, 0.3]\\
    &u_s(x, 0) = u^0_s(x),\\
    &u_s(0, t) = u_s(2\pi, t),
\end{align*}
where $u^0_s(x)$ is the initial condition that depends on the parameter $s$ and the viscosity is set to $\alpha = 0.1$.
We consider the operator that maps from the initial condition to the terminal solution at $t = 0.3$.

\textbf{Training Data:} In order to obtain more variability between initial samples for the training phase and to include different levels of steepness in the derivative of the initial data, we generate the initial conditions as follows.  We first compute a short-time solution ($t=0.1$) to Burgers' equation using the periodic boundary conditions, set the viscosity to zero, and use the initial condition $s\sin(x)$ where $s\in [0, 4]$. The solution of the system at $t = 0.1$ is then used as the initial condition $u^0_s$ (resetting time to zero); see the yellow and blue curves in Figure \ref{pic_vburgers_info} as a display. 
\begin{figure}[H]
\centering
\includegraphics[scale = 0.45]{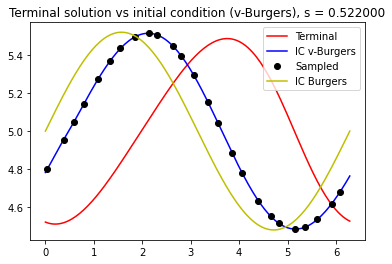}
\includegraphics[scale = 0.45]{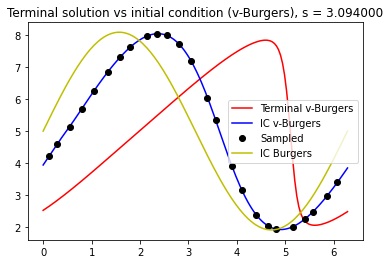}
\caption{Plots of two solutions to the viscous Burgers' equation with our initialization procedure. Note that each example's sampling points  (i.e. the sensors represented by the black dots) for the initial condition differ. The yellow curves are used to generate the initial conditions for the model problem (viscous Burgers' equation). The initial conditions for the viscous Burgers' equation are displayed in blue.}
\label{pic_vburgers_info}
\end{figure}
The mesh for the input data is as follows. Each initial condition (input function) has $25$ sensors, and we used a total of $200$ initial conditions for training. For each initial condition, the true system is evolved up to time $t=0.3$, and a total of $5$ time-stamps are collected (the terminal time is not included). Therefore the space-time mesh contains $25$-by-$5$ total sample locations for each initial condition.

\textbf{Testing and observations:} We train $100$ independent models with the same training dataset, test on the same dataset of $500$ samples and compute the average relative error of 100 predictions. To test the neural operators' ability to forecast future states,  we do not include the solution at the terminal time $t=0.3$ in the training dataset. For testing, we use solutions from $500$ initial conditions and test each neural operator on the solution at the terminal time with a finer mesh of $151$ grid points. We present the relative errors in Table~\ref{table_vburger_err}. We compare BelNet with the vanilla BelNet, which was previously shown to be more accurate than comparable models \cite{zhang2022belnet}. With fewer trainable parameters, listed in Table \ref{table_vburger_err}, BelNet obtains a small prediction error than the vanilla BelNet.

\begin{table}[H]
    \centering
    \begin{tabular}{c|c c}
         Model & Relative Error & Parameter Count \\
        \hline
        vanilla BelNet & $1.42\%$ &  102.93K\\
        \hline
        BelNet & $1.32\%$ & 96.5K
    \end{tabular}
    \caption{Relative errors and trainable parameter counts for viscous Burgers equation. The top row is the vanilla BelNet, while the second row is the BelNet. We perform $100$ independent experiments and present the average relative errors.}
    \label{table_vburger_err}
\end{table}

% 1.32 from v4 bel_theory_vburgers
% BelNet: basis enhanced, neural operator tutorial v6 from BelNet: basis enhanced, neural operator tutorial

\subsection{Multiscale Operator Learning}
We test BelNet's performance on the multiscale operator learning problem. In particular, we apply BelNet to improve a coarse-scale (low-accuracy) solution from a multiscale PDE solver. This problem was introduced in \cite{zhang2023homogenization} with the DON framework. 
Let $u_0$ denote a low-accuracy coarse-scale solution of a given PDE; the target is to construct an operator $G$ such that $G(u_0)(\cdot)$ is a fine-scale solution of the PDE. 

To learn the operator, we assume that some observed fine-scale solution data is available.  If we denote an approximation to the input function $u_0$ as $\hat{u}_0$ and $u(x_i)$ as the fine-scale observed solution at $x_i$, the dataset for training can then be denoted as $\{x_i, \hat{u}_0, u(x_i)\}_{i = 1}^{N_p}$.
We can then construct the loss function as,
\begin{align}
    \sum_{i = 1}^{N_p}\|u(x_i) - G_{\theta}(\hat{u}_0)(x_i)\|^2,
\end{align}
where $G_{\theta}$ denotes the neural network with trainable parameter $\theta$.

Since DON is not discretization invariant, i.e., the input function must be discretized in the same way,  $\hat{u}_0$ is then sampled in the same way for all training samples. This limits the potential accuracy as seen in numerical tests. To fix this,  the authors of \cite{zhang2023homogenization} used a localized patch discretization of $u_0$ which was shown to improve the performance of the DON approximation. However, this is theoretically inconsistent with the DON framework.

Since BelNet is discretization-invariant, a local patch discretize of $u_0$ is theoretically consistent. Let us denote $P_i$ as a patch (neighborhood) around at $x_i$, which is the observed solution coordinate. For example, a three-point patch for a $1d$ problem is $P_i = \{x_i-h_{i}^1, x_i, x_i+g_{i}^1\}$, where $h_{i}^1$ and $g_i^1$ are real numbers. The patch is used to discretize the input function $u_0$, i.e., $\hat{u}_i = u_0|_{P_i} = [u_0(x_i-h_i^1), u_0(x_i), u_0(x_i+g_i^1)]^{\intercal}$ is the local discretization of $u_0$. 
To make the problem more challenging, we assume $h_i$ and $g_i$ are different for all $i$.
We present a 2D display of a patch and the candidate sensors' position in Figure \ref{fig_rdm_domon}.

\begin{figure}[H]
    \centering
\begin{tikzpicture}[scale=1]

\draw[step=0.5cm, gray] (-4.5,0) grid (-2., 2);

\foreach \i in {0, ..., 4}
{
\foreach \j in {0, ..., 4}
{
\filldraw[red] (-4+\i/2, \j/2) circle (2pt) node[anchor = west]{};
}
}

\filldraw[black] (-3, 1) circle (3pt) node[anchor=west]{};

\filldraw[yellow] (-4, 0.5) circle (2pt) node[anchor=west]{};

\draw (-4, .5) node[cross = 3pt, blue] {};
\draw (-3.5, .5) node[cross = 3pt, blue] {};
\draw (-4.5, .5) node[cross = 3pt, blue] {};

\draw (-4, 1.) node[cross = 3pt, blue] {};
\draw (-3.5, 1.) node[cross = 3pt, blue] {};
\draw (-4.5, 1.) node[cross = 3pt, blue] {};

\draw (-4, 0) node[cross = 3pt, blue] {};
\draw (-3.5, 0) node[cross = 3pt, blue] {};
\draw (-4.5, 0) node[cross = 3pt, blue] {};

\end{tikzpicture}
    \caption{Plot of a $5\times 5$ patch (red dots) centered at an observation point (black dot).
    To make the problem more challenging, we randomize the sensor position. Specifically, we randomly place a sensor in a neighborhood centered at each red dot.
    Blue crosses are all candidate locations to place sensors for the yellow dot, we uniformly pick one blue 'x' to place one sensor.}
    \label{fig_rdm_domon}
\end{figure}
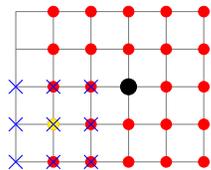

\subsubsection{One dimensional elliptic equation}
We first study a 1D example for which we can obtain an exact homogenized solution $u_0$.
In particular, let us consider the following equation:
\begin{align*}
    &-\frac{d}{dx} \left(\kappa(x/\epsilon)\frac{du}{dx}\right) = f, \quad x\in[0, 1],\\
    & u(0) = u(1) = 0,
\end{align*}
where $\kappa(x) = 0.5\sin(2\pi\frac{x}{\epsilon}) + 0.8 $ and $f(x) = 0.5$.  We plot the multiscale permeability \textcolor{black}{$\kappa(x)$} and the reference solution in Figure (\ref{1d_sprb}).
\begin{figure}[H]
\centering
\includegraphics[scale = 0.5]{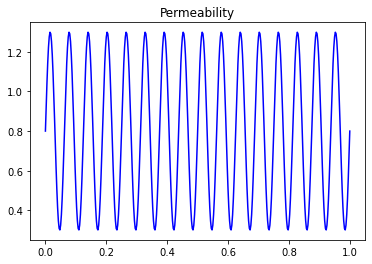}
\includegraphics[scale = 0.5]{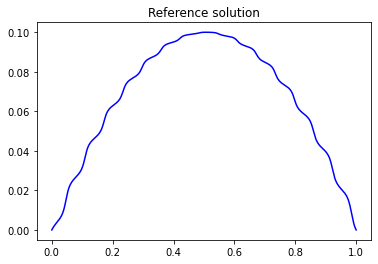}
\caption{1D elliptic. Left: permeability \textcolor{black}{$\kappa$}. Right: reference solution.}
% operator_homog_1d v101 quick save
\label{1d_sprb}
\end{figure}
The relative error of the homogenized solution is $0.07\%$; we use exact solutions $u(x_i)$ as the observations, $x_i$ are uniformly distributed, and $N_p = 16$ points are used in total. \textcolor{black}{We use the oversampling trick which employs a patch of coarse-scale solutions to capture the input function (see Figure \ref{fig_rdm_domon}),} and the result is presented in Figure \ref{fig_1d_results}.

\begin{figure}[H]
    \centering
    \includegraphics[scale = 0.5]{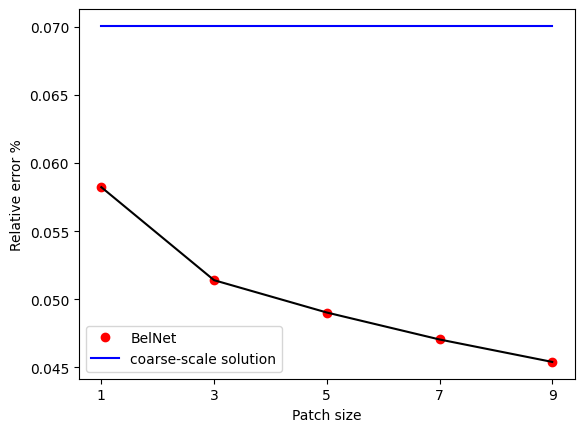}
    \caption{Relative errors with respect to different patch sizes. 
    %Right: ratio of prediction error over the coarse-scale solution error.
    For each patch size, we perform $100$ independent experiments and present the average relative error. All input functions (low accuracy solution) of each experiment do not share the discretization, and the discretizations for the same input function are not the same in $100$ independent experiments. The larger the patch size, the more flexibility in sampling the input function, and hence is more challenging to train. We do not observe an increase of the relative error with respect to the patch, which also implies that BelNet is discretization-invariant for this example. }
    \label{fig_1d_results}
\end{figure}
% # 9.51 v8 ua account
% # 4.32 v17 operator_homog_bel_theory
% # 4.52 v12 operator_homog_bel_theory
% # 5.51 v7 ua acct operator_homog_bel_theory
% # 4.32 v19 tamu acct

\textbf{Settings, Observations, and Comments: } 
We use $N_p = 16$ exact solutions uniformly distributed in the domain to improve $u_0$. It is important to note that our focus does not involve studying the decay of error in relation to the number of observation points, and for a comprehensive investigation, please refer to the work by \cite{zhang2023homogenization}.

As the size of the patch expands, there is increased flexibility in terms of sensor placement for sampling the input function. Specifically, we consider the placement of $1$, $3$, $5$, $7$, and $9$ sensors, respectively. The locations of the sensors are randomized for each local patch, denoted as $h_i^j$ and $g_i^j$, which vary across different samples of $u_0(x_i)$. Refer to Figure \ref{fig_rdm_domon} for a display of this randomization process. With an increasing number of sensors, the various possible discretizations of each sample $u_0$ become more numerous, resulting in a more challenging training scenario when dealing with larger patches. However, the results presented in Figure \ref{fig_1d_results} demonstrate that BelNet exhibits discretization invariance, as the accuracy remains unaffected and improves with patch size.

\subsubsection{2D elliptic equation with one fast variable}
We consider the following 2D elliptic equation:
\begin{align}
    &-\nabla\cdot \left(\kappa(x/\epsilon)\nabla u\right) = f, x\in\Omega = [0, 1]^2,\\
    & u(x) = 0, x\in \partial\Omega,
    \label{eqn_elliptic}
\end{align}
where $\kappa(x/\epsilon) = 2 + \sin(2\pi x/\epsilon)\cos(2\pi y/\epsilon)$ and $\epsilon = \frac{1}{8}$. We display $\kappa(x)$ in Figure (\ref{2d_kappa}).
\begin{figure}[H]
\centering
\includegraphics[scale = 0.5]{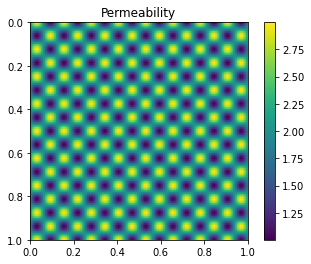}
\includegraphics[scale = 0.5]{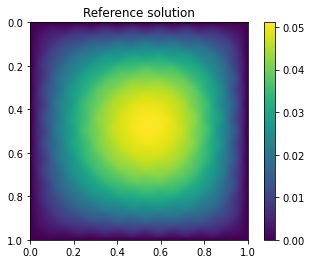}
\caption{2D elliptic with $1$ fast variable. Left: permeability $\kappa$. Right: reference solution.}
% operator_hmgn_2d_data v4 quick save
\label{2d_kappa}
\end{figure}

We measure the input function (the low-accuracy solution) by sampling it in a neighborhood around the observation sensor. 
In order to obtain the low-accuracy solution, we employ mesh-dependent solvers that define all solutions on grid points. As the neighborhood around the grid point sensor expands, there is an increase in the degrees-of-freedom available for sampling the input function. It is important to highlight that the utilization of different discretizations for the input functions poses difficulties during training. Therefore, we require a discretization-invariant tool such as BelNet to address this issue.

\textbf{Settings and comments on the results:}
We increase the patch size, perform $100$ independent experiments for each patch size and compute the average relative error. In each experiment, the input function value $u_0$, representing the low-accuracy solution, was measured by sampling random points within the patch (neighborhood) surrounding the observation point $x_i$.

The results are displayed in Figure \ref{2d_homog_pacth}. When the patch size is $1$, only a single point is used to sample $u_0$, rendering it insufficient for an accurate approximation. As the patch size increases, training becomes more challenging due to the increased freedom in sensor placement for sampling $u_0$. However, the approximation error decreases (in trend) indicating that BelNet can effectively handle problems with varying input function meshes.

\begin{figure}[H]
    \centering
\includegraphics[scale = 0.5]{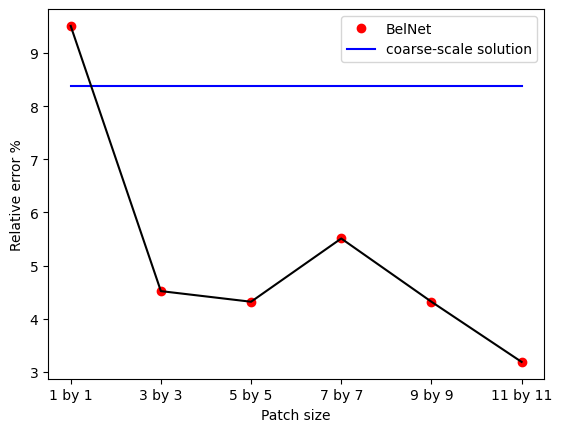}
    \caption{Relative errors with respect to different patch sizes. %Right: ratio of the prediction errors to the coarse-scale solution error.
    For each patch size, we conducted $100$ independent experiments and calculated the average relative error.
It is important to note that the input functions (representing low-accuracy solutions) used in the experiments do not share the same discretization. Furthermore, the discretization of each input function varies across the $100$ independent experiments.}
    \label{2d_homog_pacth}
\end{figure}

% # 9.51 v8 ua account
% # 4.32 v17 operator_homog_bel_theory
% # 4.52 v12 operator_homog_bel_theory
% # 5.51 v7 ua acct operator_homog_bel_theory
% # 4.32 v19 tamu acct

\subsubsection{2D elliptic multiscale PDE}
We consider the same equation (\ref{eqn_elliptic}) but with different permeability $\kappa$:
\begin{align*}
    \kappa(x, y) = 1 + \frac{\sin(2\pi\frac{x}{\epsilon_0})\cos(2\pi\frac{y}{\epsilon_1}) }{2+\cos(2\pi\frac{x}{\epsilon_2})\sin(2\pi\frac{y}{\epsilon_3})} +
    \frac{\sin(2\pi\frac{x}{\epsilon_4})\cos(2\pi\frac{y}{\epsilon_5})}{2+\cos(2\pi\frac{x}{\epsilon_6})\sin(2\pi\frac{y}{\epsilon_7})},
\end{align*}
where $\epsilon_0 = \frac{1}{5}$, $\epsilon_1 = \frac{1}{4}$, $\epsilon_2 = \frac{1}{25}$, $\epsilon_3 = \frac{1}{16}$, $\epsilon_4 = \frac{1}{16}$, $\epsilon_5 = \frac{1}{32}$, $\epsilon_6 = \frac{1}{3}$, and $\epsilon_7 = \frac{1}{9}$. We plot the permeability and the solution in Figure (\ref{non_sprb}).
\begin{figure}[H]
\centering
\includegraphics[scale = 0.5]{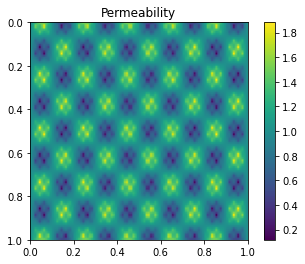}
\includegraphics[scale = 0.5]{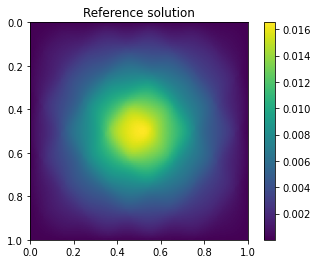}
\caption{2D elliptic multiscale PDE. Left: permeability $\kappa$. Right: reference solution.}
% operator_hmgz_nonsprb_data v4
\label{non_sprb}
\end{figure}

We obtain the coarse-scale solution by the multiscale finite element method with one local basis \cite{efendiev2013generalized,chung2016adaptive,chung2018constraint,chetverushkin2021computational}.
We conduct six sets of experiments with patch sizes $1\times 1$, $3\times 3$, and $5\times 5$, $7\times 7$, $9\times 9$ and $11\times 11$. We train $100$ models for each set of experiments and compute the average relative errors of the last $100$ epochs of $100$ models. The results are shown in Figure \ref{2d_homog_ms_pacth}.

\begin{figure}[H]
    \centering
\includegraphics[scale = 0.5]{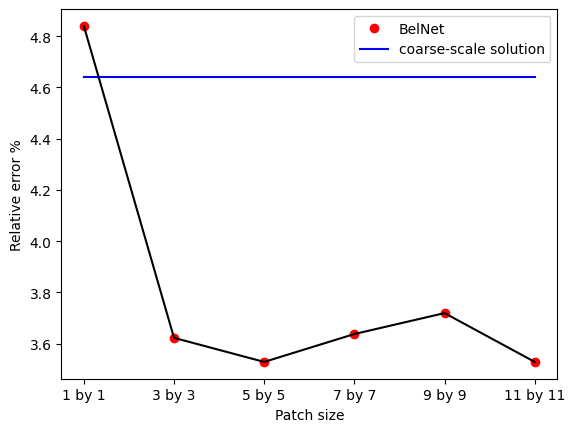}
    \caption{Relative errors with respect to different patch sizes. %Right: ratio of the prediction errors to the coarse-scale solution error.
    For each patch size, we perform $100$ independent experiments and present the average relative error.
    It should be noted that the input functions (representing the low accuracy solution) used in the experiments do not share the same discretization. Additionally, the discretization of each input function varies among the $100$ independent experiments.
 Even as the patch size expands, BelNet maintains stability. }
    \label{2d_homog_ms_pacth}
\end{figure}

% # 4.839969 v8 bel_theory_nonsprb
% # 3.622575 v7 bel_theory_nonsprb
% # 3.52857 v6 bel_theory_nonsprb
% # 3.637434 v3 bel_theory_nonsprb
% # 3.719175 v1 bel_theory_nonsprb

\textbf{Settings, Observations, and Comments: } We used a set of $16$ input function samples, denoted as $u(x_i)$, which are uniformly distributed to improve the initial function $u_0$. When the patch size is $1$, all input functions are sampled as $u_0(x_i)$. However, this sampling strategy does not yield a satisfactory approximation to $u_0$, resulting in (slightly more) inaccurate prediction as compared to a coarse-scale solution. As the patch size increases from $1\times 1$ to $11\times 11$, we incorporate a larger number of sensors. Specifically, the number of sensors used is $1$, $9$, $25$, $49$, $81$, and $121$, respectively. Despite the numerical challenge of using more sensors which are non-overlapping, the prediction accuracy does not deteriorate significantly. The error has a slight increase between $5 \times 5$ and $9\times 9$ patches which may indicate a saturation of the error within this patch size window (since the error remain around $3.7\%$). The relative error decreases again as the patch size increases.

\section{Conclusion}
We generalize the vanilla BelNet architecture proposed in \cite{zhang2022belnet} by adding a trainable nonlinear layer in the network. We prove the universal approximation theorem of BelNet in the sense of operators, extending the results of \cite{chen1993approximations,chen1995universal, lu2021learning}.
In particular, we show that BelNet can be viewed as a discretization-invariant extension of the operator networks in \cite{chen1995universal,chen1995universal} which allows for several new applications, particularly to multiscale PDE. In particular, the discretization-invariance property allows for the input functions to be observed at different sensor locations which is often the case for applications where the sensor locations move in time or where fluctuations in data acquisition occurs. For multiscale problems, the randomization of patch location and neighboring points necessitates the use of discretization-invariant learning. We test the performance on high-contrast and multiscale parametric PDE. Our experiments show that BelNet typically obtains about $1.2\times$ to $2\times$ improvement in relative error over the course-scale solution without needing to fully resolve the multiscale problem. \textcolor{black}{Lastly, it is worth noting that part of the theoretical analysis shows that the network obtains a (trained) reduced order model and projection. This is a useful result for assessing the contributions of individual subnetworks within the full operator learning architecture, i.e. peering into the black-box of deep networks for PDE.}

\section{Acknowledgement}
Z. Zhang was supported in part by AFOSR MURI FA9550-21-1-0084. H. Schaeffer was supported in part by AFOSR MURI FA9550-21-1-0084 and an NSF CAREER Award DMS-2331100.

\bibliographystyle{abbrv}
\bibliography{references}

\begin{thebibliography}{10}

\bibitem{barron1993universal}
A.~R. Barron.
\newblock Universal approximation bounds for superpositions of a sigmoidal
  function.
\newblock {\em IEEE Transactions on Information theory}, 39(3):930--945, 1993.

\bibitem{bhattacharya2021model}
K.~Bhattacharya, B.~Hosseini, N.~B. Kovachki, and A.~M. Stuart.
\newblock Model reduction and neural networks for parametric pdes.
\newblock {\em The SMAI journal of computational mathematics}, 7:121--157,
  2021.

\bibitem{brunton2016discovering}
S.~L. Brunton, J.~L. Proctor, and J.~N. Kutz.
\newblock Discovering governing equations from data by sparse identification of
  nonlinear dynamical systems.
\newblock {\em Proceedings of the national academy of sciences},
  113(15):3932--3937, 2016.

\bibitem{brunton2016sparse}
S.~L. Brunton, J.~L. Proctor, and J.~N. Kutz.
\newblock Sparse identification of nonlinear dynamics with control (sindyc).
\newblock {\em IFAC-PapersOnLine}, 49(18):710--715, 2016.

\bibitem{cao2023deep}
Q.~Cao, S.~Goswami, G.~E. Karniadakis, and S.~Chakraborty.
\newblock Deep neural operators can predict the real-time response of floating
  offshore structures under irregular waves.
\newblock {\em arXiv preprint arXiv:2302.06667}, 2023.

\bibitem{chen1993approximations}
T.~Chen and H.~Chen.
\newblock Approximations of continuous functionals by neural networks with
  application to dynamic systems.
\newblock {\em IEEE Transactions on Neural networks}, 4(6):910--918, 1993.

\bibitem{chen1995universal}
T.~Chen and H.~Chen.
\newblock Universal approximation to nonlinear operators by neural networks
  with arbitrary activation functions and its application to dynamical systems.
\newblock {\em IEEE Transactions on Neural Networks}, 6(4):911--917, 1995.

\bibitem{chetverushkin2021computational}
B.~Chetverushkin, E.~Chung, Y.~Efendiev, S.-M. Pun, and Z.~Zhang.
\newblock Computational multiscale methods for quasi-gas dynamic equations.
\newblock {\em Journal of Computational Physics}, 440:110352, 2021.

\bibitem{chung2016adaptive}
E.~Chung, Y.~Efendiev, and T.~Y. Hou.
\newblock Adaptive multiscale model reduction with generalized multiscale
  finite element methods.
\newblock {\em Journal of Computational Physics}, 320:69--95, 2016.

\bibitem{chung2018constraint}
E.~T. Chung, Y.~Efendiev, and W.~T. Leung.
\newblock Constraint energy minimizing generalized multiscale finite element
  method.
\newblock {\em Computer Methods in Applied Mechanics and Engineering},
  339:298--319, 2018.

\bibitem{cybenko1989approximation}
G.~Cybenko.
\newblock Approximation by superpositions of a sigmoidal function.
\newblock {\em Mathematics of control, signals and systems}, 2(4):303--314,
  1989.

\bibitem{efendiev2013generalized}
Y.~Efendiev, J.~Galvis, and T.~Y. Hou.
\newblock Generalized multiscale finite element methods (gmsfem).
\newblock {\em Journal of computational physics}, 251:116--135, 2013.

\bibitem{howard2023multifidelity}
A.~Howard, Y.~Fu, and P.~Stinis.
\newblock A multifidelity approach to continual learning for physical systems.
\newblock {\em arXiv preprint arXiv:2304.03894}, 2023.

\bibitem{howard2022multifidelity}
A.~A. Howard, M.~Perego, G.~E. Karniadakis, and P.~Stinis.
\newblock Multifidelity deep operator networks.
\newblock {\em arXiv preprint arXiv:2204.09157}, 2022.

\bibitem{hua2023basis}
N.~Hua and W.~Lu.
\newblock Basis operator network: A neural network-based model for learning
  nonlinear operators via neural basis.
\newblock {\em Neural Networks}, 164:21--37, 2023.

\bibitem{jin2022mionet}
P.~Jin, S.~Meng, and L.~Lu.
\newblock Mionet: Learning multiple-input operators via tensor product.
\newblock {\em SIAM Journal on Scientific Computing}, 44(6):A3490--A3514, 2022.

\bibitem{jones1992simple}
L.~K. Jones.
\newblock A simple lemma on greedy approximation in hilbert space and
  convergence rates for projection pursuit regression and neural network
  training.
\newblock {\em The annals of Statistics}, pages 608--613, 1992.

\bibitem{kovachki2021universal}
N.~Kovachki, S.~Lanthaler, and S.~Mishra.
\newblock On universal approximation and error bounds for fourier neural
  operators.
\newblock {\em Journal of Machine Learning Research}, 22:Art--No, 2021.

\bibitem{lanthaler2022error}
S.~Lanthaler, S.~Mishra, and G.~E. Karniadakis.
\newblock Error estimates for deeponets: A deep learning framework in infinite
  dimensions.
\newblock {\em Transactions of Mathematics and Its Applications}, 6(1):tnac001,
  2022.

\bibitem{lanthaler2023curse}
S.~Lanthaler and A.~M. Stuart.
\newblock The curse of dimensionality in operator learning.
\newblock {\em arXiv preprint arXiv:2306.15924}, 2023.

\bibitem{li2020fourier}
Z.~Li, N.~Kovachki, K.~Azizzadenesheli, B.~Liu, K.~Bhattacharya, A.~Stuart, and
  A.~Anandkumar.
\newblock Fourier neural operator for parametric partial differential
  equations.
\newblock {\em arXiv preprint arXiv:2010.08895}, 2020.

\bibitem{li2020neural}
Z.~Li, N.~Kovachki, K.~Azizzadenesheli, B.~Liu, K.~Bhattacharya, A.~Stuart, and
  A.~Anandkumar.
\newblock Neural operator: Graph kernel network for partial differential
  equations.
\newblock {\em arXiv preprint arXiv:2003.03485}, 2020.

\bibitem{lin2021accelerated}
G.~Lin, C.~Moya, and Z.~Zhang.
\newblock Accelerated replica exchange stochastic gradient langevin diffusion
  enhanced bayesian deeponet for solving noisy parametric pdes.
\newblock {\em arXiv preprint arXiv:2111.02484}, 2021.

\bibitem{li2022learning}
G.~Lin, C.~Moya, and Z.~Zhang.
\newblock On learning the dynamical response of nonlinear control systems with
  deep operator networks.
\newblock {\em arXiv preprint arXiv:2206.06536}, 2022.

\bibitem{lin2023b}
G.~Lin, C.~Moya, and Z.~Zhang.
\newblock B-deeponet: An enhanced bayesian deeponet for solving noisy
  parametric pdes using accelerated replica exchange sgld.
\newblock {\em Journal of Computational Physics}, 473:111713, 2023.

\bibitem{lu2021learning}
L.~Lu, P.~Jin, G.~Pang, Z.~Zhang, and G.~E. Karniadakis.
\newblock Learning nonlinear operators via deeponet based on the universal
  approximation theorem of operators.
\newblock {\em Nature Machine Intelligence}, 3(3):218--229, 2021.

\bibitem{lu2022comprehensive}
L.~Lu, X.~Meng, S.~Cai, Z.~Mao, S.~Goswami, Z.~Zhang, and G.~E. Karniadakis.
\newblock A comprehensive and fair comparison of two neural operators (with
  practical extensions) based on fair data.
\newblock {\em Computer Methods in Applied Mechanics and Engineering},
  393:114778, 2022.

\bibitem{lu2022multifidelity}
L.~Lu, R.~Pestourie, S.~G. Johnson, and G.~Romano.
\newblock Multifidelity deep neural operators for efficient learning of partial
  differential equations with application to fast inverse design of nanoscale
  heat transport.
\newblock {\em Physical Review Research}, 4(2):023210, 2022.

\bibitem{mangan2016inferring}
N.~M. Mangan, S.~L. Brunton, J.~L. Proctor, and J.~N. Kutz.
\newblock Inferring biological networks by sparse identification of nonlinear
  dynamics.
\newblock {\em IEEE Transactions on Molecular, Biological and Multi-Scale
  Communications}, 2(1):52--63, 2016.

\bibitem{meuris2023machine}
B.~Meuris, S.~Qadeer, and P.~Stinis.
\newblock Machine-learning-based spectral methods for partial differential
  equations.
\newblock {\em Scientific Reports}, 13(1):1739, 2023.

\bibitem{michalowska2023neural}
K.~Micha{\l}owska, S.~Goswami, G.~E. Karniadakis, and S.~Riemer-S{\o}rensen.
\newblock Neural operator learning for long-time integration in dynamical
  systems with recurrent neural networks.
\newblock {\em arXiv preprint arXiv:2303.02243}, 2023.

\bibitem{o2022derivate}
T.~O'Leary-Roseberry, P.~Chen, U.~Villa, and O.~Ghattas.
\newblock Derivative informed neural operator: An efficient framework for
  high-dimensional parametric derivative learning.
\newblock {\em arXiv preprint arXiv:2206.10745}, 2022.

\bibitem{pathak2022fourcastnet}
J.~Pathak, S.~Subramanian, P.~Harrington, S.~Raja, A.~Chattopadhyay,
  M.~Mardani, T.~Kurth, D.~Hall, Z.~Li, K.~Azizzadenesheli, et~al.
\newblock Fourcastnet: A global data-driven high-resolution weather model using
  adaptive fourier neural operators.
\newblock {\em arXiv preprint arXiv:2202.11214}, 2022.

\bibitem{qian2022reduced}
E.~Qian, I.-G. Farcas, and K.~Willcox.
\newblock Reduced operator inference for nonlinear partial differential
  equations.
\newblock {\em SIAM Journal on Scientific Computing}, 44(4):A1934--A1959, 2022.

\bibitem{raissi2018hidden}
M.~Raissi and G.~E. Karniadakis.
\newblock Hidden physics models: Machine learning of nonlinear partial
  differential equations.
\newblock {\em Journal of Computational Physics}, 357:125--141, 2018.

\bibitem{schaeffer2017learning}
H.~Schaeffer.
\newblock Learning partial differential equations via data discovery and sparse
  optimization.
\newblock {\em Proceedings of the Royal Society A: Mathematical, Physical and
  Engineering Sciences}, 473(2197):20160446, 2017.

\bibitem{schaeffer2013sparse}
H.~Schaeffer, R.~Caflisch, C.~D. Hauck, and S.~Osher.
\newblock Sparse dynamics for partial differential equations.
\newblock {\em Proceedings of the National Academy of Sciences},
  110(17):6634--6639, 2013.

\bibitem{schaeffer2017integral}
H.~Schaeffer and S.~G. McCalla.
\newblock Sparse model selection via integral terms.
\newblock {\em Physical Review E}, 96(2):023302, 2017.

\bibitem{schaeffer2018extracting}
H.~Schaeffer, G.~Tran, and R.~Ward.
\newblock Extracting sparse high-dimensional dynamics from limited data.
\newblock {\em SIAM Journal on Applied Mathematics}, 78(6):3279--3295, 2018.

\bibitem{schaeffer2020extracting2}
H.~Schaeffer, G.~Tran, R.~Ward, and L.~Zhang.
\newblock Extracting structured dynamical systems using sparse optimization
  with very few samples.
\newblock {\em Multiscale Modeling \& Simulation}, 18(4):1435--1461, 2020.

\bibitem{schmidt2009distilling}
M.~Schmidt and H.~Lipson.
\newblock Distilling free-form natural laws from experimental data.
\newblock {\em science}, 324(5923):81--85, 2009.

\bibitem{wen2022u}
G.~Wen, Z.~Li, K.~Azizzadenesheli, A.~Anandkumar, and S.~M. Benson.
\newblock U-fno—an enhanced fourier neural operator-based deep-learning model
  for multiphase flow.
\newblock {\em Advances in Water Resources}, 163:104180, 2022.

\bibitem{zhang2022belnet}
Z.~Zhang, W.~T. Leung, and H.~Schaeffer.
\newblock Belnet: Basis enhanced learning, a mesh-free neural operator.
\newblock {\em arXiv preprint arXiv:2212.07336}, 2022.

\bibitem{zhang2023homogenization}
Z.~Zhang, C.~Moya, W.~T. Leung, G.~Lin, and H.~Schaeffer.
\newblock Bayesian deep operator learning for homogenized1 to fine-scale maps
  for multiscale pde.
\newblock 2023.

\bibitem{zhu2023fourier}
M.~Zhu, S.~Feng, Y.~Lin, and L.~Lu.
\newblock Fourier-deeponet: Fourier-enhanced deep operator networks for full
  waveform inversion with improved accuracy, generalizability, and robustness.
\newblock {\em arXiv preprint arXiv:2305.17289}, 2023.

\end{thebibliography}
\end{document}